\def\tank#1{\protected@xdef\@thanks{\@thanks
        \protect\footnotetext[0]{#1}}}
\def\bigfoot{

    \@footnotetext}
\newcommand{\ea}{\end{array}}
\newtheorem{thm}{Theorem}[section]
\newtheorem{lem}{Lemma}[section]
\newtheorem{defi}{Definition}[section]
\newtheorem{rmk}{Remark}[section]
\newtheorem{exm}{Example}[section]
\numberwithin{equation}{section}
\newenvironment{proof}{Proof}{\hfill $\Box$}
\def\RR{\mathbb{R}}
\def\PP{\mathbb{P}}
\def\EE{\mathbb{E}}
\def\NN{\mathbb{N}}
\def\cB{{\mathcal B}}
\def\si{{\sigma}}
\def\et{{\eta}}
\def\si{{\sigma}}
\def\EE{\mathbb{ E}}
\def\si{{\sigma}}
\begin{document}
\title{\Large \bf Irreducibility of stochastic complex Ginzburg-Landau equations driven by pure jump noise and its applications}
\date{}
\author{{Hao Yang}$^1$\footnote{E-mail:yanghao@hfut.edu.cn}~~~{Jian Wang}$^2$\footnote{E-mail:wg1995@mail.ustc.edu.cn}~~~ {Jianliang Zhai}$^2$\footnote{E-mail:zhaijl@ustc.edu.cn}
\\
 \small 1. School of Mathematics, Hefei University of Technology, Hefei, Anhui 230009, China. \\
  \small 2. School of Mathematical Sciences,
 \small  University of Science and Technology of China,\\
 \small  Hefei, Anhui 230026, China.\\
}
\maketitle

\begin{center}
\begin{minipage}{130mm}
{\bf Abstract:}
Considering irreducibility is fundamental for studying the ergodicity of stochastic dynamical systems. In this paper, we establish the irreducibility  of stochastic complex Ginzburg-Laudau equations driven by pure jump noise. Our results are dimension free and the conditions placed on  the driving noises are very
mild. A crucial role is played by criteria developed by the authors of this paper and T. Zhang for the irreducibility of stochastic equations driven by pure jump noise.  As  an application, we obtain the ergodicity of  stochastic complex Ginzburg-Laudau equations. We remark that our ergodicity result
covers the weakly dissipative case with pure jump degenerate noise.


\vspace{3mm} {\bf Keywords:}
 Irreducibility; Pure jump noise; Complex Ginzburg-Laudau equation;  Ergodicity.

\vspace{3mm} {\bf AMS Subject Classification (2020):}
60H15; 60G51; 37A25.
\end{minipage}
\end{center}

\newpage

\renewcommand\baselinestretch{1.2}
\setlength{\baselineskip}{0.28in}
\section{Introduction and motivation}\label{Intr}

Let $H$ be a topological space with Borel $\sigma$-field $\mathcal{B}(H)$, and let $\mathbb{X}:=\{X^x(t),t\geq0;x\in H\}$ be an $H$-valued  Markov process on some
probability space $(\Omega,\mathcal{F},\PP)$.
$\mathbb{X}$ is said to be strongly irreducible in $H$ if for each $t>0$ and $x\in H$
\begin{center}
$\PP(X^x(t)\in B)>0$ \quad for any nonempty open set $B$.
\end{center}
$\mathbb{X}$ is said to be weakly irreducible (also called accessible) to
$x_0\in H$ if the resolvent $R_{\lambda}, \lambda>0$ satisfies
\begin{equation*}
R_{\lambda}(y,U)=\lambda \int_0^{\infty}e^{-\lambda t}\PP(X^x(t)\in U)dt>0
\end{equation*}
for all $x\in H$ and all neighborhoods $U$ of $x_0$, where $\lambda > 0$ is arbitrary. It is clear that strong irreducibility implies accessibility.

Irreducibility is a fundamental property of stochastic dynamic systems.
The importance of the study of this irreducibility lies in  its relevance in the analysis of the ergodicity of  Markov processes.
The uniqueness of the invariant measures is usually obtained by proving irreducibility and the strong Feller property, or the asymptotic strong Feller property, or the $e$-property; see \cite{DZ 1996, D, DMT 1995, Hairer M 2006, Kapica, Komorowski, PZ, Z}. The main aim of the current paper is to  study the irreducibility of stochastic complex Ginzburg-Landau equations driven by pure jump noise.

The Ginzburg-Landau equation was proposed by physicists Ginzburg and Landau in the 1950s as a low-temperature superconducting model \cite{GL}. The model, for which Ginzburg and Landau won the Nobel prize in physics in 2003, is widely used in fields such as superconductivity, superfluidity, Bose-Einstein condensation, and physical phase transition processes \cite{aran}, in particular, the model is used in the description of spatial pattern formation and the onset of instabilities in nonequilibrium fluid dynamical systems \cite{cross, doering, GL1}.



We now survey previous results concerning the irreducibility of stochastic Ginzburg-Landau equations driven by pure jump noise.
To do this, we first introduce the so-called cylindrical pure jump L\'evy processes defined by the orthogonal expansion
\begin{eqnarray}\label{eq Intro 1}
L(t)=\sum_{i}\beta_iL_i(t)e_i,\ \ t\geq0,
\end{eqnarray}
where $\{e_i\}$ is an orthonormal basis of a  separable Hilbert space $H$, $\{L_i\}$ are real-valued i.i.d. pure jump L\'evy processes, and $\{\beta_i\}$ is a given sequence of nonzero real numbers.

%

 In 2013, the authors in \cite{Xulihu} obtained the accessibility to zero of  stochastic real-valued Ginzburg-Landau equations on torus $\mathbb{T}=\mathbb{R}\setminus \mathbb{Z}$ in $H:=\{h\in L^2(\mathbb{T}):\int_\mathbb{T}h(y)dy=0\}$; see the proof of \cite[Theorem 2.4]{Xulihu}.
 The driving noises they considered are the so-called cylindrical symmetric $\alpha$-stable processes with $\alpha\in(1,2)$, which have the form (\ref{eq Intro 1}) with  $\{L_i\}$ replaced by real-valued i.i.d. symmetric $\alpha$-stable processes.
 A key point in their analysis is to prove the claim that the
stochastic convolutions with respect to $L$ stay, with positive probability, in an arbitrary
small ball with zero centre on some path spaces. 
Subsequently, the authors solved a control problem to obtain the accessibility.
Some technical restrictions are placed
on the driving noises. For example,
(ii) on page 3713 of \cite{Xulihu}, i.e.,
\begin{eqnarray}\label{eq intro 2}
\alpha\in(1,2)
\end{eqnarray}
and
\begin{eqnarray}\label{eq intro 3}
C_1\gamma_i^{-\beta}\leq |\beta_i|\leq C_2\gamma_i^{-\beta}\text{ with }\beta>\frac{1}{2}+\frac{1}{2\alpha}\text{ for some positive constants }C_1\text{ and }C_2,
\end{eqnarray}
here $\{\gamma_i=4\pi^2|i|^2\}$ are the eigenvalues of the Laplace operator on $H$.
Under the same assumptions of \cite{Xulihu}, in 2017, the authors in \cite{WXX}  established  the strong irreducibility of stochastic real-valued Ginzburg-Landau equations; see \cite[Theorem 2.3]{WXX}. To do this, they established a support result for stochastic convolutions with respect to $L$  on some suitable path space; see \cite[Lemma 3.2]{WXX}. They then solved a new control problem with polynomial term to obtain the irreducibility.
By improving the methods in \cite{WXX}, in 2018, the authors in \cite{Xulihu0} obtained the strong irreducibility of stochastic real-valued Ginzburg-Landau equations driven by subordinated cylindrical Wiener process with a $\alpha/2$-stable subordinator, $\alpha\in(1,2)$; see \cite[Theorem 2.2]{Xulihu0}. Since the main ideas of \cite{Xulihu0} are similar to that of \cite{WXX}, the technical restrictions (\ref{eq intro 2}) and (\ref{eq intro 3}) on the driving noises  are
required in \cite{Xulihu0}.

We remark that all the prior results on the irreducibility of  stochastic Ginzburg-Landau equations driven by pure jump noise  concern the real-valued and one-dimensional case. The previous methods regarding the irreducibility of stochastic real-valued Ginzburg-Landau equations driven by pure jump noise  are basically
along the same lines as that of the Gaussian case; that is, two ingredients play a very important
role: the (approximate) controllability of the associated PDEs and the support of stochastic convolutions on path spaces.
These methods always  have some
very restrictive assumptions on the driving noises, such as that the driving noises are additive type and  in the class of stable processes, and  technical assumptions such as  (\ref{eq intro 2}) and (\ref{eq intro 3}) are required. The use of those methods to deal with the case of  other types of pure jump noises is unclear. Moreover, using these methods to study the irreducibility  of stochastic complex Ginzburg-Landau equations would be very hard, if not impossible.

To the best of our knowledge, there are no results on the irreducibility  of stochastic complex Ginzburg-Landau equations driven by pure  jump noise. This strongly motivates the current paper. In this paper, on the one hand, we obtain the strong irreducibility  of stochastic complex Ginzburg-Landau equations driven by multiplicative pure  jump nondegenerate noise. The conditions placed on the driving noises are very
mild, including a large class of compound Poisson processes and L\'evy processes with heavy tails such as cylindrical symmetric and non-symmetric $\alpha$-stable processes with $\alpha\in(0,2)$ and subordinated cylindrical Wiener processes with a $\alpha/2$-stable subordinator, $\alpha\in(0,2)$, etc. Therefore, our results not only cover all of the previous results but also remove certain technical restrictions required in those results, which we have mentioned above. See Theorems \ref{thmmulti-Yang} and \ref{thmmulti-Yang1} in this paper. On the other hand, we establish the accessibility  of stochastic complex Ginzburg-Landau equations driven by additive pure  jump noise and the driving noises could be degenerate. See Theorem \ref{thm2} in this paper.  To prove these main results in this paper,  a crucial role is played by the criteria for the irreducibility of stochastic equations driven by pure jump noise developed in \cite{WYZZ,WYZZ1} by the authors of this paper and T. Zhang.  As an application, we obtain the ergodicity of stochastic complex Ginzburg-Laudau equations. We remark that our ergodicity results
cover the weakly dissipative case with pure jump degenerate noise.
See Section 5 in this paper. Finally, we point out that all of
our results are dimension free.

The organization of the paper is as follows. Section 2 presents the well-posedness of stochastic complex Ginzburg-Landau equations driven by pure jump noise. In Sections 3 and 4, we prove the strong irreducibility and accessibility of stochastic complex Ginzburg-Landau equations, respectively. In Section 5, we  apply our main results to obtain the ergodicity of stochastic complex Ginzburg-Landau equations.
\section{Preliminaries and Well-posedness}
In this section, we  introduce stochastic complex Ginzburg-Landau equations driven by pure jump noise and present its well-posedness.

Let  $(\Omega,\mathcal{F},\{{\mathcal{F}}_t\}_{t\geq 0},\mathbb{P})$  be a complete probability space with a filtration $\{{\mathcal{F}}_t\}_{t\geq 0}$ satisfying the usual conditions.
Let $d\in\mathbb{N}$ and $D$ be a bounded open domain in $\mathbb{R}^d$ with smooth boundary $\partial D$.
$L^q=L^q(D)$ stands for the space of complex-valued measurable functions $u$ satisfying the Dirichlet boundary
condition, i.e., $u(x)=0, x\in \partial D$, such that
$$
\|u\|_{L^q}=\Big(\int_D|u(x)|^qdx\Big)^{1/q}<\infty.
$$
We regard $H=L^2$ as a complex Hilbert space with the scalar product
$$
\langle u,v\rangle={{\rm{Re}}}\int_Du(x)\overline{v(x)}dx,\ u,v\in H,
$$
and denote by $\|\cdot\|$ the corresponding norm.
$V=H_0^1$ is the space of functions that belong to the complex-valued Sobolev space
$H^1(D)$ and satisfy the Dirichlet boundary condition. We provide $V$ with the norm
$$
\|u\|_V=\Big(\int_D|\nabla u(x)|^2dx\Big)^{1/2},\ u\in V.
$$

Let $(Z, \cB(Z))$ be a metric space, and $\nu$ a given $\si$-finite  measure on it; that is, there exists $Z_n\in\mathcal{B}(Z), n\in\mathbb{N}$ such that $Z_n\uparrow Z$ and $\nu(Z_n)<\infty, \forall n\in\mathbb{N}$. Let $N: \cB(Z\times\RR^+) \times \Omega\rightarrow \bar{\NN} := \NN \cup \{0, \infty\}$ be a time-homogeneous Poisson random measure on $(Z, \cB(Z) )$ with intensity measure $\nu$. For the existence of such Poisson random measure, we refer the reader to \cite{IW1989}.
We denote by $\tilde{N} (dz,dt) = N(dz,dt) - \nu(dz)dt$ the compensated Poisson random measure associated to $N$.

 Consider the following stochastic Ginzburg-Landau equations with Dirichlet boundary condition driven by pure jump noise:
\begin{eqnarray}\label{1}
        du_t &=& \big((\alpha_1+i\beta_1)\Delta u_t+(\alpha_2+i\beta_2)|u_t|^{2\theta}u_t+\lambda u_t\big)dt +\int_{Z^c_1}\si(u_{t-}, z)\tilde{N}(dt, dz) \nonumber\\
        && +\int_{Z_1} \si(u_{t-}, z)N(dt, dz),\nonumber\\
        u_t&=&0 \text{ on } \partial D \text{ for any }  t\geq 0,
      \end{eqnarray}
 where $\si: H\times Z \rightarrow H$ is a  measurable mapping,
$\theta>0,~\beta_1,\beta_2,\lambda\in\mathbb{R},~\alpha_1>0$, $\alpha_2<0$ are constants, and for any $m\in\mathbb{N}$, $Z_m^c$ denotes the complement of $Z_m$ relative to $Z$.

Here is the definition of a solution to (\ref{1}).
\begin{defi}
An $H$-valued c\`{a}dl\`{a}g $\mathcal{F}_t$-adapted process $u=(u_t)_{t\in[0,\infty)}$ is called a solution of (\ref{1}), if there exists a $dt\times\mathbb{P}$-equivalent class $\hat{u}$ of $u$ such that
\par
(1) $\hat{u}\in L^2_{loc}([0,\infty);V)\cap L^{2\theta+2}_{loc}([0,\infty);L^{2\theta+2}(D)),~\mathbb{P}$-a.s.
\par
(2) The following equality holds, $\mathbb{P}$-a.s., for any $t\geq0$,
\begin{eqnarray*}
        u_t &=& u_0+\int_0^t\big((\alpha_1+i\beta_1)\Delta \hat{u}_s+(\alpha_2+i\beta_2)|\hat{u}_s|^{2\theta}\hat{u}_s+\lambda {u}_s\big)ds +\int_0^t\int_{Z_1^c}\si({u}_{s-}, z)\tilde{N}(ds, dz) \nonumber\\
        && +\int_0^t\int_{Z_1} \si({u}_{s-}, z)N(ds, dz).
      \end{eqnarray*}
The above equation is interpreted as an equation in $V^*$, the dual space of $V$.
\end{defi}

We introduce the following conditions.

{\bf (C1)}$: \alpha_2+\frac{\theta|\beta_2|}{\sqrt{2\theta+1}}< 0$.

{\bf (C2)}: There exists a positive constant $k_0$ such that
\begin{equation*}
  \int_{Z_1^c}\|\sigma(0,z)\|^2\nu(dz)\leq k_0.
\end{equation*}

{\bf (C3)}: There exist positive constants $k_1$ and $2\leq p<2\theta+2$ such that,  for all $v_1,~v_2\in H$,
\begin{equation*}
 \int_{Z_1^c} \|\sigma(v_1,z)-\sigma(v_2,z)\|^2\nu(dz)\leq k_1(\|v_1-v_2\|^2\vee\|v_1-v_2\|^p).
\end{equation*}

{\bf (C4)}: For any $z\in Z_1$, $\sigma(\cdot,z):H\rightarrow H$ is continuous.
\begin{rmk}\label{rmk-yang}
Conditions (C2) and (C3) implies that for any $v\in H$
\begin{equation}\label{2-1}
  \int_{Z_1^c}\|\sigma(v,z)\|^2\nu(dz)\leq 2(k_0+k_1+k_1\|v\|^p).
\end{equation}
\end{rmk}

Now we state the result on the existence and uniqueness of the solution to (\ref{1}).
\begin{thm}\label{thm1}
Under Conditions (C1)--(C4), for any $u_0\in H$, there exists a unique solution $u^{u_0}=(u^{u_0}_t)_{t\geq 0}$ to  (\ref{1}) with the initial data $u_0$. Moreover, $\{u^{x}\}_{x\in H}$ forms a strong Markov process.
\end{thm}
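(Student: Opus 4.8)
The plan is to combine an interlacing procedure for the large jumps with the variational (monotonicity) method for the resulting equation. Since $Z_1$ is a member of the $\sigma$-finite exhaustion, $\nu(Z_1)<\infty$, and hence the large-jump integral $\int_{Z_1}\sigma(u_{s-},z)\,N(ds,dz)$ is driven by a measure whose jump times are locally finite. I would first solve, on each random interval between two consecutive large-jump times, the equation driven only by the compensated small-jump noise on $Z_1^c$, and then paste these pieces together by adding the jump $\sigma(u_{\tau-},z)$ at each large-jump time $\tau$; condition (C4) ensures that this interlacing produces a well-defined c\`adl\`ag $\mathcal F_t$-adapted process. In this way the whole problem reduces to the well-posedness of
\begin{equation*}
du_t = A(u_t)\,dt + \int_{Z_1^c}\sigma(u_{t-},z)\,\tilde N(dt,dz),
\end{equation*}
where $A(u)=(\alpha_1+i\beta_1)\Delta u+(\alpha_2+i\beta_2)|u|^{2\theta}u+\lambda u$, which I would treat in the Gelfand triple $\mathbb V:=V\cap L^{2\theta+2}(D)\subset H\subset\mathbb V^*$.

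The heart of the argument is the verification of the structural conditions for $A$. For coercivity, the identities $\langle i\Delta u,u\rangle=0$ and $\mathrm{Re}\langle i|u|^{2\theta}u,u\rangle=0$ (both because $\int|\nabla u|^2$ and $\int|u|^{2\theta+2}$ are real) give, after integrating by parts,
\begin{equation*}
\langle A(u),u\rangle = -\alpha_1\|u\|_V^2 + \alpha_2\|u\|_{L^{2\theta+2}}^{2\theta+2} + \lambda\|u\|^2,
\end{equation*}
so both dissipative norms enter with the correct sign (recall $\alpha_1>0$, $\alpha_2<0$). For monotonicity, the decisive ingredient is the pointwise inequality for the complex nonlinearity: for all $a,b\in\mathbb C$,
\begin{equation*}
\Big|\mathrm{Im}\big[(|a|^{2\theta}a-|b|^{2\theta}b)\,\overline{(a-b)}\big]\Big| \le \frac{\theta}{\sqrt{2\theta+1}}\,\mathrm{Re}\big[(|a|^{2\theta}a-|b|^{2\theta}b)\,\overline{(a-b)}\big],
\end{equation*}
the right-hand side being nonnegative. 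Writing $R\ge0$ for that real part, one obtains $\mathrm{Re}[(\alpha_2+i\beta_2)(\cdots)]\le(\alpha_2+\tfrac{\theta|\beta_2|}{\sqrt{2\theta+1}})R\le0$, the last step being precisely condition (C1). Consequently the nonlinearity is genuinely dissipative and
\begin{equation*}
\langle A(u)-A(v),u-v\rangle \le -\alpha_1\|u-v\|_V^2 + \lambda\|u-v\|^2.
\end{equation*}
This is exactly where (C1) is indispensable: without it the imaginary part $i\beta_2|u|^{2\theta}u$ could overwhelm the real dissipation. Hemicontinuity of $A$ and its growth bound in $\mathbb V^*$ are then routine.

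For existence I would run a Galerkin scheme, projecting onto the span of the first $n$ eigenfunctions of $-\Delta$ and solving the resulting finite-dimensional SDE with jumps. Uniform a priori estimates follow by applying It\^o's formula to $\|u^n_t\|^2$: the quadratic-variation contribution of the small-jump integral is controlled by Remark \ref{rmk-yang}, namely $\int_{Z_1^c}\|\sigma(u,z)\|^2\nu(dz)\le 2(k_0+k_1+k_1\|u\|^p)$, and since $2\le p<2\theta+2$ and $D$ is bounded, H\"older together with Young's inequality yields $\|u\|^p\le\epsilon\|u\|_{L^{2\theta+2}}^{2\theta+2}+C_\epsilon$, so this term is absorbed by $\alpha_2\|u\|_{L^{2\theta+2}}^{2\theta+2}$. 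This gives bounds, uniform in $n$, in $L^\infty([0,T];H)\cap L^2([0,T];V)\cap L^{2\theta+2}([0,T];L^{2\theta+2})$, and hence convergent subsequences in the corresponding weak and weak-$\ast$ topologies.

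The main obstacle is passing to the limit in the nonlinear term $|u^n|^{2\theta}u^n$, for which weak convergence alone does not suffice. Here I would invoke the monotonicity through a Minty--Browder argument adapted to jump noise: combining the energy identity for $\|u^n_t\|^2$ with the monotone inequality above applied to $u^n$ against arbitrary test processes, and using lower semicontinuity of the norms in the limit, one identifies the weak limit of $A(u^n)$ with $A(u)$. The delicate point is the bookkeeping of the stochastic integral and its quadratic variation under the limit, which is handled in the standard way once the a priori bounds and hypotheses (C3)--(C4) are in force. Uniqueness is then immediate from the displayed monotone inequality and Gronwall's lemma applied to $\EE\|u_t-v_t\|^2$. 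Finally, the strong Markov property follows from pathwise uniqueness together with the time-homogeneity and independent-increment structure of the Poisson random measure, since the solution is a measurable functional of the initial datum and of the noise increments after the starting time, which are independent of the corresponding initial $\sigma$-field.
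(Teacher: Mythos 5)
Your proposal follows essentially the same route the paper itself invokes (the paper omits the proof, citing the weak-convergence/monotonicity arguments of Brze\'zniak--Liu--Zhu and Lin--Gao together with interlacing of the finitely many large jumps on $Z_1$, and deriving the strong Markov property from the Feller property, c\`adl\`ag paths and (C4)); your use of the Okazawa-type inequality with constant $\tfrac{\theta}{\sqrt{2\theta+1}}$ under (C1) is exactly the dissipativity mechanism the paper relies on. The only point where you are too quick is the claim that uniqueness is ``immediate from Gronwall applied to $\EE\|u_t-v_t\|^2$'': condition (C3) only gives the superlinear bound $k_1(\|v_1-v_2\|^2\vee\|v_1-v_2\|^p)$ with $p$ possibly larger than $2$, so one must either localize with stopping times or use a stochastic Gronwall inequality (as the paper does in the proof of Theorem 3.1 via \cite[Lemma 3.7]{XZ}) rather than the plain Gronwall lemma in expectation.
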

Combining the weak convergence argument and monotonicity argument as in \cite{BZ} and the idea of proving \cite[Theorem 2.5]{Lin}, under Conditions (C1)--(C3), one can establish the well-posedness of (\ref{1}). The strong Markov property of $\{u^{x}\}_{x\in H}$ follows from the Feller property, the fact $u^{x}\in D([0,\infty);H),$ $\mathbb{P}$-a.s., and Condition (C4). Since our primary concern in this paper is the irreducibility of $\{u^{x}\}_{x\in H}$, the proof of Theorem \ref{thm1} is omitted here.

\vskip 0.2cm
In the sequel, the symbol $C$ will denote a positive generic constant whose value may
change from line to line. For any $x\in H$, let $u^x=(u^x_t)_{t\geq0}$ be the unique solution to (\ref{1}) with the initial data $x$.

\section{Strong Irreducibility}
In this section, we study the strong irreducibility of (\ref{1}).

%

To study the strong irreducibility of (\ref{1}), we introduce a nondegenerate condition on the intensity measure $\nu$, which basically says that for any $\hbar,y\in H$, one can reach the neighborhoods of $y$ from $\hbar$ through a finite number of choosing jumps.

{\bf (C5)}: For any $\hbar,y\in H$ with $\hbar\neq y$ and any $\bar{\eta}>0$, there exist $n,m\in\mathbb{N}$, and $\{l_i,i=1,2,...,n\}\subset Z_m$
 such that,
 for any $\eta\in(0,\frac{\bar{\eta}}{2})$, there exist $\{\epsilon_i,i=1,2,...,n\}\subset(0,\infty)$ and $\{\eta_i,i=0,1,...,n\}\subset(0,\infty)$ such that the following hold, denoting
$$
q_0=\hbar,\ q_{i}=q_{i-1}+\sigma(q_{i-1},l_{i}),\ i=1,2,...,n,
$$
 \begin{itemize}
\item  $0<\eta_0\leq \eta_1\leq ...\leq \eta_{n-1}\leq \eta_n\leq \eta$;
   \item for any $i=0,1,...,n-1$, $\{\tilde{q}+\sigma(\tilde{q},l):\ \tilde{q}\in B(q_i,\eta_i),l\in B(l_{i+1},\epsilon_{i+1})\}\subset B(q_{i+1},\eta_{i+1})$;
   \item $B(q_{n},\eta_{n})\subset B(y,\frac{\bar{\eta}}{2})$;
   \item for any $i=1,2,...,n$, $\nu(B(l_{i},\epsilon_i))>0$;
   \item there exists $m_0\geq m$ such that $\bigcup_{i=1}^n B(l_{i},\epsilon_{i})\subset Z_{m_0}$.
 \end{itemize}
\vskip 0.3cm

We have the following main result in this paper.
\begin{thm}\label{thmmulti-Yang}
Under Conditions (C1)--(C5), the solution $\{u^x\}_{x\in H}$ of (\ref{1}) is strongly irreducible in $H$.
\end{thm}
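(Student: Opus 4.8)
The plan is to prove strong irreducibility by invoking the abstract criterion from \cite{WYZZ,WYZZ1}, which reduces the problem to two ingredients: first, that a \emph{skeleton/noise-free approximation} of the dynamics can be steered close to any target state using only finitely many prescribed jumps; and second, that the noise can realize such a finite jump scenario with strictly positive probability. Fix $x\in H$, a nonempty open set $B$, and $t>0$. Choose $y\in B$ together with $\bar\eta>0$ so that the ball $B(y,\bar\eta)\subset B$. The role of Condition (C5) is precisely to provide, for the pair $(\hbar,y)=(x,y)$, a finite chain of jump sites $l_1,\dots,l_n\in Z_m$ and a nested sequence of radii $\eta_0\le\dots\le\eta_n\le\eta$ and $\epsilon_1,\dots,\epsilon_n$ such that starting from $q_0=x$ the deterministic jump recursion $q_i=q_{i-1}+\sigma(q_{i-1},l_i)$ lands the terminal ball $B(q_n,\eta_n)$ inside $B(y,\bar\eta/2)$, with the inclusions propagating stably: any point in $B(q_i,\eta_i)$ jumping via any site in $B(l_{i+1},\epsilon_{i+1})$ is carried into $B(q_{i+1},\eta_{i+1})$.

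First I would set up the event on which exactly $n$ ``large'' jumps occur during $[0,t]$, at times $0<\tau_1<\dots<\tau_n<t$, with the $i$-th jump site $z_i$ falling in $B(l_i,\epsilon_i)\subset Z_{m_0}$, and with no other jumps from $Z_{m_0}$ and only small (compensated) jumps from $Z_1^c$ occurring in between. Because $\nu(B(l_i,\epsilon_i))>0$ for each $i$ and $\bigcup_i B(l_i,\epsilon_i)\subset Z_{m_0}$ with $\nu(Z_{m_0})<\infty$, this event has strictly positive probability: the configuration of $n$ Poisson points in the prescribed product of space--time windows, together with the absence of competing large jumps, is a standard positive-probability event for a Poisson random measure. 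The strategy is then to show that \emph{on this event, with positive conditional probability the solution $u^x_t$ lies in $B(y,\bar\eta)$}. The trajectory evolves by the continuous drift-plus-small-noise flow between the large jumps and jumps by $\sigma(u_{\tau_i^-},z_i)$ at each $\tau_i$; I would control the continuous pieces so that they move the state by less than the gap allowed in the nested ball estimate, and use the stability inclusions of (C5) to push the state successively through $B(q_1,\eta_1),\dots,B(q_n,\eta_n)\subset B(y,\bar\eta/2)$.

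To make the continuous-in-between pieces harmless I would use the dissipativity built into Condition (C1) together with the moment/Lipschitz-type bounds on $\sigma$ from (C2)--(C3) and Remark~\ref{rmk-yang}: on short time windows and with small compensated-jump contributions, the flow map $x\mapsto$ (solution at the end of the window with no large jumps) stays uniformly close to the identity, so that by taking the large-jump times clustered appropriately and the intervening displacement small, the net deterministic-plus-small-noise drift over each sub-interval is dominated by the slack $\eta_{i+1}-\eta_i$ (or by a fraction of $\bar\eta/2$) afforded by (C5). Continuity of $\sigma(\cdot,z)$ for $z\in Z_1$ from (C4) ensures that the jump map $\tilde q\mapsto\tilde q+\sigma(\tilde q,z)$ is continuous in both arguments, which is what legitimizes the inclusion hypothesis in (C5) being used with the actual (perturbed) pre-jump state and the actual jump site rather than the idealized $q_i,l_i$. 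Assembling these, the conditional probability that $u^x_t\in B(y,\bar\eta/2)\subset B$ is positive, hence $\PP(u^x_t\in B)>0$.

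The main obstacle I anticipate is the quantitative control of the continuous evolution between jumps in the strong $H$-norm, given that the nonlinearity $(\alpha_2+i\beta_2)|u|^{2\theta}u$ is only handled weakly (the solution lives in $L^2_{loc}(V)\cap L^{2\theta+2}_{loc}(L^{2\theta+2})$) and the small jumps enter through a compensated stochastic integral whose fluctuations must be made small with positive probability uniformly over the relevant time windows. Establishing a flow-continuity estimate robust enough to interface cleanly with the purely topological, $\sigma$-driven nesting of (C5) is the delicate step; I expect this to be where the dissipativity (C1) and the growth control \eqref{2-1} must be combined carefully, possibly by first proving that the small-jump stochastic convolution stays in an arbitrarily small ball on $[0,t]$ with positive probability and then treating the remaining deterministic problem as a controllability question solved explicitly by the chain of jumps.
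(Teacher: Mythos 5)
Your overall framing is right in one respect: the proof does go through the abstract criterion of \cite{WYZZ}. But you have misidentified what that criterion leaves to be verified, and the part you then set out to prove is not the part this theorem needs. Given Condition (C5) (which is a hypothesis of the theorem, not something to be established), \cite[Theorem 2.1]{WYZZ} together with the well-posedness of Theorem \ref{thm1} reduces strong irreducibility to a single local statement, namely Claim 1 of the paper: for every $h\in H$ and every small $\eta>0$ there exist $\epsilon,t>0$ with $\inf_{\tilde h\in B(h,\epsilon)}\PP(\tau^{\eta}_{\tilde h,h}\geq t)>0$, i.e.\ solutions started uniformly close to $h$ remain in $B(h,\eta)$ up to a fixed small time with probability bounded below. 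The entire chain-of-jumps construction that occupies most of your proposal (realizing $n$ Poisson points in prescribed space--time windows, propagating the state through the nested balls $B(q_i,\eta_i)$, etc.) is already contained inside the proof of the cited criterion and does not need to be redone here; re-deriving it is not wrong in principle, but it means you are proving the external theorem rather than verifying its hypothesis.

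The genuine gap lies in the one ingredient the paper actually has to supply and that you only gesture at: the quantitative control of the evolution in the absence of large jumps. You flag this as the delicate step, but the route you suggest --- showing the small-jump stochastic convolution stays in a small ball with positive probability and then solving a deterministic controllability problem --- is precisely the older method of \cite{Xulihu,WXX} that the introduction explains cannot work here: it has no meaning for multiplicative $\sigma$ and is what forces restrictions such as \eqref{eq intro 2}--\eqref{eq intro 3}. The paper's actual argument is different and is the technical heart of the proof: apply the It\^o formula to $\|\tilde u^{\tilde h}_t-\tilde u^h_t\|^2$ for the equation with large jumps removed; use Lemma \ref{lemma1} together with (C1) to show that the complex Ginzburg--Landau nonlinearity contributes a nonpositive term (this is exactly where $\alpha_2+\theta|\beta_2|/\sqrt{2\theta+1}<0$ enters); absorb the remaining drift into a constant using that the dissipative $\|\cdot\|^{2\theta+2}_{L^{2\theta+2}}$ term dominates the $\|\cdot\|^2\vee\|\cdot\|^p$ growth coming from (C3); and then invoke the \emph{stochastic} Gronwall inequality of \cite[Lemma 3.7]{XZ} to obtain a $q$-th moment bound on $\sup_{0\leq t\leq T}\|\tilde u^{\tilde h}_t-\tilde u^h_t\|^2$ --- note that (C3) is not a Lipschitz condition, so an ordinary Gronwall argument on expectations does not close. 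One then restricts to the event $\{\tau_1>t_1\}$ that no large jump occurs before a small time $t_1$ (which has positive probability since $\nu(Z_1)<\infty$ and is independent of the small-jump dynamics, so the full solution coincides with the truncated one there) and finally uses right-continuity of $t\mapsto u^h_t$ at $0$. None of this machinery appears in your proposal, so as written the plan does not produce the estimate that the criterion requires.
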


To prove Theorem \ref{thmmulti-Yang}, the following result taken from \cite[Lemmas 2.1 and 2.2]{Okazawa} will be used.
\begin{lem}\label{lemma1}
Let $U$ be a complex Hilbert space with inner product $(\cdot,\cdot)$ and norm $\|\cdot\|_U$. Then
\begin{itemize}
  \item for any $\gamma>1$ and  any nonzero $a, b\in U$ with $a\neq b$,
\begin{equation}\label{37}
\frac{|{\rm{Im}}(\|a\|_U^{\gamma-2}a-\|b\|_U^{\gamma-2}b,a-b)|}{{\rm{Re}}(\|a\|_U^{\gamma-2}a-\|b\|_U^{\gamma-2}b,a-b)}\leq\frac{|\gamma-2|}{2\sqrt{\gamma-1}}.
\end{equation}
  \item for any $p\geq 2$ and  any $a,b\in U$,
\begin{equation}
{\rm{Re}}(\|a\|_U^{p-2}a-\|b\|_U^{p-2}b,a-b)\geq2^{2-p}\|a-b\|_U^p.
\end{equation}
\end{itemize}
\end{lem}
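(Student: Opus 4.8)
The final statement is Lemma \ref{lemma1}, a pair of elementary but quite delicate inequalities in a complex Hilbert space $U$. The plan is to reduce each part to a scalar or planar computation and exploit the structure of the gradient of the convex functional $a\mapsto \|a\|_U^{\gamma}$.

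For the first inequality, the plan is to write $f(a)=\|a\|_U^{\gamma-2}a$ and recognize that, up to a constant, this is the gradient of $a\mapsto\frac{1}{\gamma}\|a\|_U^{\gamma}$. First I would reduce to understanding the quantity $(f(a)-f(b),a-b)$ by introducing the path $c(s)=b+s(a-b)$, $s\in[0,1]$, and writing
\begin{equation*}
(f(a)-f(b),a-b)=\int_0^1\frac{d}{ds}\big(f(c(s)),a-b\big)\,ds.
\end{equation*}
The derivative of $f$ in direction $h=a-b$ is computed by differentiating $\|c\|_U^{\gamma-2}c$; using that $\frac{d}{ds}\|c\|_U^2=2\,{\rm Re}(c,h)$ one gets
\begin{equation*}
\frac{d}{ds}f(c)=\|c\|_U^{\gamma-2}h+(\gamma-2)\|c\|_U^{\gamma-4}\,{\rm Re}(c,h)\,c.
\end{equation*}
Pairing with $h$ and taking real and imaginary parts, I would obtain explicit integrands: the real part integrand is $\|c\|_U^{\gamma-2}\|h\|_U^2+(\gamma-2)\|c\|_U^{\gamma-4}({\rm Re}(c,h))^2$, which is nonnegative for $\gamma>1$, while the imaginary part integrand is $(\gamma-2)\|c\|_U^{\gamma-4}\,{\rm Re}(c,h)\,{\rm Im}(c,h)$. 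The ratio of imaginary to real parts is then bounded pointwise by controlling $\frac{|{\rm Re}(c,h)||{\rm Im}(c,h)|}{\|h\|_U^2\|c\|_U^2+(\gamma-2)({\rm Re}(c,h))^2}$; writing ${\rm Re}(c,h)=r$, ${\rm Im}(c,h)=t$, and using Cauchy--Schwarz $r^2+t^2\le\|c\|_U^2\|h\|_U^2$, the bound $\frac{|\gamma-2|\,|r|\,|t|}{r^2+t^2+(\gamma-2)r^2}\le\frac{|\gamma-2|}{2\sqrt{\gamma-1}}$ follows from maximizing over $t$ (AM--GM on the $t^2$ and $(\gamma-1)r^2$ terms). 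Finally I would pass from the pointwise bound on the integrands to the bound on the ratio of the integrals, which is legitimate since the real-part integrand is pointwise nonnegative and dominates.

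For the second inequality, with $p\ge 2$, the plan is to use the convexity/monotonicity of $f(a)=\|a\|_U^{p-2}a$ and reduce to a two-dimensional estimate. I would use the real-part integrand computed above, namely ${\rm Re}(f(a)-f(b),a-b)=\int_0^1\big(\|c\|_U^{p-2}\|h\|_U^2+(p-2)\|c\|_U^{p-4}({\rm Re}(c,h))^2\big)\,ds\ge \int_0^1\|c\|_U^{p-2}\|h\|_U^2\,ds$ since the second term is nonnegative for $p\ge2$. It then suffices to show $\int_0^1\|c(s)\|_U^{p-2}\,ds\ge 2^{2-p}\|a-b\|_U^{p-2}$, i.e. a lower bound on the average of $\|c\|_U^{p-2}$ along the segment. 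Since the segment from $b$ to $a$ has length $\|a-b\|_U$ and its midpoint satisfies $\|c(1/2)\|_U=\tfrac12\|a+b\|_U$, I would instead bound $\|c(s)\|_U$ below on a suitable subinterval, or more cleanly invoke the known scalar-type convexity inequality: for the function $\phi(s)=\|c(s)\|_U$, which is convex, one has $\int_0^1 \phi(s)^{p-2}\,ds\ge \big(\int_0^1\phi(s)\,ds\big)^{p-2}$ only for $p-2\ge1$ by Jensen, so a separate elementary argument is needed for $1\le p-2<$ the relevant range.

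The main obstacle will be the second inequality in the regime $2\le p<4$ (so $0\le p-2<2$), where the exponent $p-2$ makes Jensen's inequality point the wrong way and the integrand $\|c(s)\|_U^{p-2}$ can be small near an interior zero of $\|c(s)\|_U$. The cleanest route is to sidestep the integral representation entirely and argue directly: by homogeneity reduce to $\|a-b\|_U=1$, then reduce to the real two-dimensional span of $a,b$ (the functions $a\mapsto\|a\|_U^{p-2}a$ and the inner product only see the real plane generated by $a$ and $b$ together with the relevant phases), and finally verify the inequality ${\rm Re}(\|a\|^{p-2}a-\|b\|^{p-2}b,a-b)\ge 2^{2-p}$ by a compactness/direct-optimization argument in $\mathbb{R}^2$ or $\mathbb{C}$. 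Since this lemma is quoted verbatim from \cite{Okazawa}, I would in practice cite that reference for the sharp constant $2^{2-p}$ and present the path-integral computation above as the self-contained verification of the first inequality and of the nonnegativity structure underlying the second.
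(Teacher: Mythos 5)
Your handling of the first inequality is correct, and it is worth noting that the paper itself contains no proof of this lemma at all: it is quoted verbatim from Lemmas 2.1 and 2.2 of \cite{Okazawa}, so your closing remark that in practice you would cite that reference coincides with the paper's actual treatment. Your path-integral computation is a valid self-contained verification of \eqref{37}: with $c(s)=b+s(a-b)$, $h=a-b$, $r={\rm Re}(c,h)$, $t={\rm Im}(c,h)$, Cauchy--Schwarz gives the denominator bound $\|c\|_U^2\|h\|_U^2+(\gamma-2)r^2\geq(\gamma-1)r^2+t^2\geq2\sqrt{\gamma-1}\,|r|\,|t|$ for every $\gamma>1$ (both signs of $\gamma-2$), the real-part integrand is strictly positive off a null set, and pointwise domination passes to the integrals. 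The only point you leave unsaid is why the fundamental theorem of calculus applies when $1<\gamma<2$ and the segment crosses the origin; this is harmless since the integrands blow up like $|s-s_0|^{\gamma-2}$ with $\gamma-2>-1$, but it deserves a sentence.

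The second inequality is where your proposal genuinely breaks. The declared reduction --- discard the term $(p-2)\|c\|_U^{p-4}({\rm Re}(c,h))^2$ and prove $\int_0^1\|c(s)\|_U^{p-2}\,ds\geq2^{2-p}\|a-b\|_U^{p-2}$ --- fails because that averaged bound is simply false: for $b=-a\neq0$ one has $c(s)=(2s-1)a$, so $\int_0^1\|c(s)\|_U^{p-2}ds=\|a\|_U^{p-2}/(p-1)$, while the right-hand side equals $\|a\|_U^{p-2}$; the bound fails by the factor $p-1$ for every $p>2$, not only in your flagged range $2\leq p<4$. Worse, $b=-a$ is exactly the equality case of the lemma, and there the discarded term carries the fraction $(p-2)/(p-1)$ of the total, so no Jensen-type refinement of the first term alone can ever recover the constant $2^{2-p}$; and your fallback ``compactness/direct-optimization'' produces \emph{some} positive constant but not the stated sharp one without actually solving the optimization, which is the content of the lemma. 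A correct elementary completion: from $2{\rm Re}(a,b)=\|a\|_U^2+\|b\|_U^2-\|a-b\|_U^2$ one gets the identity
\begin{equation*}
{\rm Re}\big(\|a\|_U^{p-2}a-\|b\|_U^{p-2}b,\,a-b\big)=\tfrac12\big(\|a\|_U^{p-2}-\|b\|_U^{p-2}\big)\big(\|a\|_U^{2}-\|b\|_U^{2}\big)+\tfrac12\big(\|a\|_U^{p-2}+\|b\|_U^{p-2}\big)\|a-b\|_U^{2},
\end{equation*}
with both summands nonnegative; writing $x=\|a\|_U$, $y=\|b\|_U$, $d=\|a-b\|_U\leq x+y$, the function $d\mapsto2^{2-p}d^p-\tfrac12(x^{p-2}+y^{p-2})d^2$ is maximized on $[0,x+y]$ at an endpoint (its derivative is $d$ times an increasing function changing sign at most once), and at $d=x+y$ the required inequality reduces, after simplification, to $\big(\tfrac{x+y}{2}\big)^{p-1}\leq\tfrac{x^{p-1}+y^{p-1}}{2}$, i.e.\ convexity of $t\mapsto t^{p-1}$ for $p\geq2$. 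Either substitute this argument or, as the paper does, cite \cite{Okazawa} outright; what you cannot do is keep the reduction as written.
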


\begin{proof}{\bf(Proof of Theorem \ref{thmmulti-Yang})}

Applying \cite[Theorem 2.1]{WYZZ} and Theorem \ref{thm1} in this paper, we see that the proof of this theorem will be complete once
we prove the following claim.

For any $x,y\in H$, $\eta>0$, define the stopping time $\tau_{x,y}^\eta=\inf\{t\geq 0; u^x_t\not\in B(y,\eta)\}$. It is easy to see that
$\tau_{x,x}^\eta>0$, $\mathbb{P}$-a.s.

{\bf Claim 1}: For any $h\in H$, there exists $\eta_h>0$ such that, for any $\eta\in(0,\eta_h]$, there exist $(\epsilon,t)=(\epsilon(h,\eta),t(h,\eta))\in (0,\frac{\eta}{2}]\times (0,\infty)$ satisfying,
\begin{eqnarray*}
 \inf_{\tilde{h}\in B(h,\epsilon)}\mathbb{P}(\tau_{\tilde{h},h}^\eta\geq t)>0.
\end{eqnarray*}

The claim's proof is divided into two steps.

{\bf Step 1.}
Removing the large jumps from (\ref{1}), let us consider the following equation
 \begin{eqnarray}\label{acce}
 d\tilde{u}_t&=&\big((\alpha_1+i\beta_1)\Delta \tilde{u}_t+(\alpha_2+i\beta_2)|\tilde{u}_t|^{2\theta}\tilde{u}_t+\lambda \tilde{u}_t\big)dt +\int_{Z_1^c}\si(\tilde{u}_{t-}, z)\tilde{N}(dt, dz),\nonumber\\
  \tilde{u}_t&=&0 \text{ on } \partial D \text{ for any }  t\geq0.
\end{eqnarray}

Denote by $\tilde{u}_t^{h}$ and $\tilde{u}_t^{\tilde{h}}$ the solutions of (\ref{acce}) with the initial data $h,\tilde{h}\in H$, respectively.  Using the It\^{o} formula, we have
\begin{eqnarray}\label{yang-2}
&&\|\tilde{u}_t^{\tilde{h}}-\tilde{u}_t^h\|^2\nonumber\\
&=&\|\tilde{h}-h\|^2-2\alpha_1\int_0^t\|\tilde{u}_s^{\tilde{h}}-\tilde{u}_s^h\|_V^2ds+2\lambda\int_0^t\|\tilde{u}_s^{\tilde{h}}-\tilde{u}_s^h\|^2ds \nonumber\\
&&+2{\rm{Re}}\int_0^t\langle \tilde{u}_s^{\tilde{h}}-\tilde{u}_s^h,(\alpha_2+i\beta_2)(|\tilde{u}_s^{\tilde{h}}|^{2\theta}\tilde{u}_s^{\tilde{h}}-|\tilde{u}_s^h|^{2\theta}\tilde{u}_s^h)\rangle ds\nonumber\\
&&+2{\rm{Re}}\int_0^t\int_{Z_1^c}\langle \tilde{u}_{s-}^{\tilde{h}}-\tilde{u}_{s-}^h,\sigma(\tilde{u}_{s-}^{\tilde{h}},z)-\sigma(\tilde{u}_{s-}^h,z) \rangle\tilde N(ds,dz)\nonumber\\
&&+\int_0^t\int_{Z_1^c}\|\sigma(\tilde{u}_{s-}^{\tilde{h}},z)-\sigma(\tilde{u}_{s-}^h,z)\|^2N(ds,dz).
\end{eqnarray}

For the fourth term on the right side  of the above equality, we apply Lemma \ref{lemma1} with $\gamma=p=2\theta+2$  and Condition (C1) to obtain
\begin{eqnarray}\label{yang-3}
&&2{\rm{Re}}\langle \tilde{u}_s^{\tilde{h}}-\tilde{u}_s^h,(\alpha_2+i\beta_2)(|\tilde{u}_s^{\tilde{h}}|^{2\theta}\tilde{u}_s^{\tilde{h}}-|\tilde{u}_s^h|^{2\theta}\tilde{u}_s^h)\rangle\nonumber\\
&=&2\alpha_2{\rm{Re}}\langle \tilde{u}_s^{\tilde{h}}-\tilde{u}_s^h,|\tilde{u}_s^{\tilde{h}}|^{2\theta}\tilde{u}_s^{\tilde{h}}-|\tilde{u}_s^h|^{2\theta}\tilde{u}_s^h\rangle\nonumber\\
&&-2\beta_2{\rm{Im}}\langle \tilde{u}_s^{\tilde{h}}-\tilde{u}_s^h,|\tilde{u}_s^{\tilde{h}}|^{2\theta}\tilde{u}_s^{\tilde{h}}-|\tilde{u}_s^h|^{2\theta}\tilde{u}_s^h\rangle\nonumber\\
&\leq&2(\alpha_2+\frac{\theta|\beta_2|}{\sqrt{2\theta+1}}){\rm{Re}}\langle \tilde{u}_s^{\tilde{h}}-\tilde{u}_s^h,|\tilde{u}_s^{\tilde{h}}|^{2\theta}\tilde{u}_s^{\tilde{h}}-|\tilde{u}_s^h|^{2\theta}\tilde{u}_s^h\rangle\nonumber\\
&\leq&2^{1-2\theta}(\alpha_2+\frac{\theta|\beta_2|}{\sqrt{2\theta+1}})\|\tilde{u}_s^{\tilde{h}}-\tilde{u}_s^h\|^{2\theta+2}_{L^{2\theta+2}}\leq 0.
\end{eqnarray}
Since $\alpha_1>0$, (\ref{yang-2}) and (\ref{yang-3}) imply that, for any $t\geq 0$,
\begin{eqnarray}\label{yang-10}
&&\|\tilde{u}_t^{\tilde{h}}-\tilde{u}_t^h\|^2\nonumber\\
&\leq&\|\tilde{h}-h\|^2+2\lambda\int_0^t\|\tilde{u}_s^{\tilde{h}}-\tilde{u}_s^h\|^2ds\nonumber\\
&&+2^{1-2\theta}(\alpha_2+\frac{\theta|\beta_2|}{\sqrt{2\theta+1}})\int_0^t\|\tilde{u}_s^{\tilde{h}}-\tilde{u}_s^h\|^{2\theta+2}_{L^{2\theta+2}}ds\nonumber\\
&&+2{\rm{Re}}\int_0^t\int_{Z_1^c}\langle \tilde{u}_{s-}^{\tilde{h}}-\tilde{u}_{s-}^h,\sigma(\tilde{u}_{s-}^{\tilde{h}},z)-\sigma(\tilde{u}_{s-}^h,z)\rangle \tilde N(ds,dz)\nonumber\\
&&+\int_0^t\int_{Z_1^c}\|\sigma(\tilde{u}_{s-}^{\tilde{h}},z)-\sigma(\tilde{u}_{s-}^h,z)\|^2\tilde N(ds,dz)\nonumber\\
&&+
\int_0^t\int_{Z_1^c}\|\sigma(\tilde{u}_{s}^{\tilde{h}},z)-\sigma(\tilde{u}_{s}^h,z)\|^2\nu(dz)ds\nonumber\\
&\leq&\|\tilde{h}-h\|^2+2\lambda\int_0^t\|\tilde{u}_s^{\tilde{h}}-\tilde{u}_s^h\|^2ds\nonumber\\
&&+2^{1-2\theta}(\alpha_2+\frac{\theta|\beta_2|}{\sqrt{2\theta+1}})\int_0^t\|\tilde{u}_s^{\tilde{h}}-\tilde{u}_s^h\|^{2\theta+2}_{L^{2\theta+2}}ds\nonumber\\
&&+2{\rm{Re}}\int_0^t\int_{Z_1^c}\langle \tilde{u}_{s-}^{\tilde{h}}-\tilde{u}_{s-}^h,\sigma(\tilde{u}_{s-}^{\tilde{h}},z)-\sigma(\tilde{u}_{s-}^h,z)\rangle \tilde N(ds,dz)\nonumber\\
&&+\int_0^t\int_{Z_1^c}\|\sigma(\tilde{u}_{s-}^{\tilde{h}},z)-\sigma(\tilde{u}_{s-}^h,z)\|^2\tilde N(ds,dz)\nonumber\\
&&+
k_1\int_0^t\big[\|\tilde{u}_s^{\tilde{h}}-\tilde{u}_s^h\|^2\vee\|\tilde{u}_s^{\tilde{h}}-\tilde{u}_s^h\|^p\big]ds\nonumber\\
&\leq&\|\tilde{h}-h\|^2+Ct
+2{\rm{Re}}\int_0^t\int_{Z_1^c}\langle \tilde{u}_{s-}^{\tilde{h}}-\tilde{u}_{s-}^h,\sigma(\tilde{u}_{s-}^{\tilde{h}},z)-\sigma(\tilde{u}_{s-}^h,z)\rangle \tilde N(ds,dz)\nonumber\\
&&+\int_0^t\int_{Z_1^c}\|\sigma(\tilde{u}_{s-}^{\tilde{h}},z)-\sigma(\tilde{u}_{s-}^h,z)\|^2\tilde N(ds,dz).
\end{eqnarray}
To get the second and last inequalities of (\ref{yang-10}), Conditions (C1) and (C3) and the following fact have been used. Condition (C1) and $2\leq p<2\theta+2$ imply that there exists a constant $C$ such that
\begin{eqnarray*}
&&2\lambda\|\tilde{u}_s^{\tilde{h}}-\tilde{u}_s^h\|^2
+2^{1-2\theta}(\alpha_2+\frac{\theta|\beta_2|}{\sqrt{2\theta+1}})\|\tilde{u}_s^{\tilde{h}}-\tilde{u}_s^h\|^{2\theta+2}_{L^{2\theta+2}}\\
&&+
k_1\big[\|\tilde{u}_s^{\tilde{h}}-\tilde{u}_s^h\|^2\vee\|\tilde{u}_s^{\tilde{h}}-\tilde{u}_s^h\|^p\big]
\leq C<\infty.
\end{eqnarray*}

Applying stochastic Gronwall's inequality (see \cite[Lemma 3.7]{XZ}), we deduce from (\ref{yang-10}) that for any $0<q<p<1$ and any $T>0$,
\begin{eqnarray}\label{eq mono pro}
\EE\Big[\Big(\sup_{0\leq t \leq T}\|\tilde{u}_t^{\tilde{h}}-\tilde{u}_t^h\|^2\Big)^q\Big]
\leq
(\frac{p}{p-q})\Big(\|\tilde{h}-h\|^{2}+CT\Big)^q.
\end{eqnarray}

Therefore, applying Chebysev's inequality, for any $\eta>0$, there exists $\tilde{\epsilon},\tilde{t}>0$ such that, for any $\tilde{h}\in B(h,\tilde{\epsilon})$,
\begin{eqnarray}\label{3-6}
\mathbb{P}\Big(\sup_{0\leq s\leq \tilde t}\|\tilde{u}_s^{\tilde{h}}-\tilde{u}_s^h\|\leq\frac{\eta}{4}\Big)
=1-\mathbb{P}\Big(\sup_{0\leq s\leq \tilde t}\|\tilde{u}_s^{\tilde{h}}-\tilde{u}_s^h\|>\frac{\eta}{4}\Big)
\geq\frac{1}{2}.
\end{eqnarray}

{\bf Step 2.}
Denote
\begin{equation*}
 N(Z_1,t):=\int_{0}^{t}\int_{Z_1}N(dz,ds),\ t\geq0.
\end{equation*}
Let $\tau_1$ be the first jumping time of the Poisson process $ N(Z_1,t),~t\geq0$, i.e.,
\begin{eqnarray}\label{stop time 1}
\tau_1=\inf\{t\geq 0; N(Z_1, t)=1\}.
\end{eqnarray}
$\tau_1$ has an exponential distribution with parameter $\nu(Z_1)<\infty$, that is,
\begin{eqnarray*}
\mathbb{P}(\tau_1> s)=e^{-\nu(Z_1)s},\ \ \ \ \mathbb{P}(\tau_1\leq s)=1-e^{-\nu(Z_1)s}.
\end{eqnarray*}
We can choose $t_1$ small enough such that
\begin{equation}\label{3-7}
0<t_1\leq \tilde{t} \text{ and }\mathbb{P}(\tau_1> t_1)>\frac{1}{2}.
\end{equation}

\vskip 0.3cm
 It is easy to see that, for any $x\in H$, $\{u^x_t,t\in[0,\tau_1)\}$ is the unique solution to (\ref{acce}) with the initial data $x$  on $t\in[0,\tau_1)$. Then,
by (\ref{3-6}) and (\ref{3-7}), for any $\tilde{h}\in B(h,\tilde{\epsilon})$,
\begin{eqnarray}\label{yang-17}
&&\mathbb{P}\big(\sup_{0\leq s\leq  t_1}\|u_s^{\tilde{h}}-u_s^h\|\leq\frac{\eta}{4}\big)\nonumber\\
&=&\mathbb{P}\big(\sup_{0\leq s\leq  t_1}\|u_s^{\tilde{h}}-u_s^h\|\leq\frac{\eta}{4},\tau_1> t_1\big)+\mathbb{P}\big(\sup_{0\leq s\leq t_1}\|u_s^{\tilde{h}}-u_s^h\|\leq\frac{\eta}{4},\tau_1\leq  t_1\big)\nonumber\\
&\geq&\mathbb{P}\big(\sup_{0\leq s\leq  t_1}\|u_s^{\tilde{h}}-u_s^h\|\leq\frac{\eta}{4},\tau_1> t_1\big)\nonumber\\
&=&\mathbb{P}\big(\sup_{0\leq s\leq  t_1}\|\tilde{u}_s^{\tilde{h}}-\tilde{u}_s^{h}\|\leq\frac{\eta}{4},\tau_1> t_1\big)\nonumber\\
&=&\mathbb{P}\big(\sup_{0\leq s\leq  t_1}\|\tilde{u}_s^{\tilde{h}}-\tilde{u}_s^{h}\|\leq\frac{\eta}{4}\big)\times\mathbb{P}\big(\tau_1> t_1\big)\nonumber\\
&\geq&\frac{1}{4}.
\end{eqnarray}
For the last equality of (\ref{yang-17}), we have used the fact that, for any $x\in H$, $\sigma\{\tilde{u}_t^x,t\geq 0\}$ and $\sigma\{\tau_1\}$ are independent.

Since $u^h\in D([0,\infty);H)$ $\mathbb{P}$-a.s., we know that there exists $t_2>0$ small enough such that
\begin{equation}\label{yang-16}
\mathbb{P}(\sup_{0\leq s\leq t_2}\|u_{s}^{h}-h\|>\frac{\eta}{4})<\frac{1}{8}.
\end{equation}
Set
\begin{equation*}
\epsilon=\frac{\eta}{8}\wedge \tilde{\epsilon}, \quad t=t_1\wedge t_2.
\end{equation*}
By (\ref{yang-17}) and (\ref{yang-16}),
\begin{eqnarray}\label{3-4}
 &&\inf_{\tilde{h}\in B(h,\epsilon)}\mathbb{P}(\tau_{\tilde{h},h}^\eta\geq  t )\nonumber\\
 &\geq& \inf_{\tilde{h}\in B(h,\epsilon)}\mathbb{P}\big(\sup_{0\leq s\leq  t}\|u_s^{\tilde{h}}-h\|\leq\frac{\eta}{2}\big)\nonumber\\
  &\geq& \inf_{\tilde{h}\in B(h,\epsilon)}\mathbb{P}\big(\{\sup_{0\leq s\leq  t}\|u_s^{\tilde{h}}-u_s^{h}\|\leq\frac{\eta}{4}\}\cap\{\sup_{0\leq s\leq t}\|u_s^{h}-h\|\leq\frac{\eta}{4}\}\big)\nonumber\\
   &\geq& \inf_{\tilde{h}\in B(h,\epsilon)}\big(1-\mathbb{P}\big(\sup_{0\leq s\leq  t}\|u_s^{\tilde{h}}-u_s^{h}\|>\frac{\eta}{4}\big)-\mathbb{P}\big(\sup_{0\leq s\leq  t}\|u_s^{h}-h\|>\frac{\eta}{4}\big)\big)\nonumber\\
   &=&\inf_{\tilde{h}\in B(h,\epsilon)}\mathbb{P}\big(\sup_{0\leq s\leq  t}\|u_s^{\tilde{h}}-u_s^{h}\|\leq\frac{\eta}{4}\big)-\mathbb{P}\big(\sup_{0\leq s\leq t}\|u_s^{h}-h\|>\frac{\eta}{4}\big)\nonumber\\
   &\geq&\inf_{\tilde{h}\in B(h,\epsilon)}\mathbb{P}\big(\sup_{0\leq s\leq  t}\|u_s^{\tilde{h}}-u_s^{h}\|\leq\frac{\eta}{4}\big)-\frac{1}{8}\nonumber\\
   &\geq&\frac{1}{8}.
\end{eqnarray}

This completes the proof of Claim 1, so the proof of Theorem \ref{thmmulti-Yang} is also complete.
\end{proof}
\vskip 0.3cm

Let us now consider the particular case of additive noise. Let $Z=H$, and let $\nu$ denote a  given  $\si$-finite intensity measure of a L\'evy process  on $H$. Recall that $\nu(\{0\})=0$ and $\int_{H}(\|z\|_H^2\wedge1)\nu(dz)<\infty$.
Let $N: \cB(H\times\RR^+) \times \Omega\rightarrow \bar{\NN}$ be the time homogeneous Poisson random measure with intensity measure $\nu$. Again $\tilde{N} (dz,dt) = N(dz,dt) - \nu(dz)dt$ denotes the  compensated Poisson random measure associated to $N$.
\vskip 0.2cm
Let us point out that (as shown by e.g., \cite[Theorems 4.23 and 6.8]{PZ 2007}) in this case  $$L(t)=\int_0^t\int_{0<\|z\|_H\leq1}z\tilde{N}(dz,ds) + \int_0^t\int_{\|z\|_H>1} zN(dz,ds), t\geq0$$ defines an $H$-valued L\'evy process.

Consider (\ref{1}) driven by $L(t)$, $t\geq0$; that is, $\sigma(\cdot,z)\equiv z$ and
\begin{eqnarray}\label{meq1-additive case}
  &&du_t= \big((\alpha_1+i\beta_1)\Delta u_t+(\alpha_2+i\beta_2)|u_t|^{2\theta}u_t+\lambda u_t\big)dt +dL(t),\nonumber\\
  &&u_t=0, \text{ on } \partial D \text{ for any } t\geq 0.
\end{eqnarray}
In this setting,  we introduce the following hypothesis regarding the intensity measure $\nu$:

{\bf (C5')}: For any $h \in H$ and $\et >0$, there exist $n\in\mathbb{N}$, a sequence of strict positive numbers $\et_1,\ \et_2, \cdots, \et_n$, and $a_1,\ a_2, \cdots,\ a_n \in H\setminus\{0\}$, such that $ 0 \notin \overline{B(a_i, \et_i)}$, $\nu\big(B(a_i, \et_i)\big)>0$, $1 \leq i \leq n$ and $\sum_{i=1}^n{ B(a_i, \et_i)}:=\{\sum_{i=1}^n h_i:h_i\in B(a_i, \et_i),  1 \leq i \leq n\} \subset B(h, \et)$.

\vskip 0.2cm
As an application of Theorem \ref{thmmulti-Yang} (see also \cite[Theorem 2.2]{WYZZ}), we have
\begin{thm}\label{thmmulti-Yang1}
Under Conditions (C1) and (C5'), the solution $\{u^x\}_{x\in H}$ to (\ref{meq1-additive case}) is strongly irreducible in $H$.
\end{thm}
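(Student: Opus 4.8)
The plan is to recognize (\ref{meq1-additive case}) as the special case of (\ref{1}) in which $\sigma(\cdot,z)\equiv z$, $Z=H$, and the $\sigma$-finite exhaustion is taken to be $Z_m=\{z\in H:\|z\|_H>1/m\}$, so that $Z_1=\{\|z\|_H>1\}$ and $Z_1^c=\{0<\|z\|_H\le 1\}$; note $\nu(Z_1)<\infty$ and $Z_m\uparrow H\setminus\{0\}$ hold because $\nu$ is a L\'evy measure and $\nu(\{0\})=0$. Since (C1) is assumed outright, it then suffices to verify Conditions (C2)--(C5) and quote Theorem \ref{thmmulti-Yang}. Conditions (C2)--(C4) are immediate for additive noise: $\sigma(0,z)=z$ and $\int_{0<\|z\|_H\le1}\|z\|_H^2\,\nu(dz)<\infty$ by the integrability $\int_H(\|z\|_H^2\wedge1)\nu(dz)<\infty$, giving (C2); since $\sigma(v_1,z)-\sigma(v_2,z)=0$ the left-hand side of (C3) vanishes, so (C3) holds for any $k_1>0$ and any admissible $p\in[2,2\theta+2)$ (such $p$ exists, e.g. $p=2$, as $\theta>0$); and $\sigma(\cdot,z)\equiv z$ is continuous, giving (C4).

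The heart of the matter is to derive (C5) from (C5'). Fix $\hbar\ne y$ in $H$ and $\bar\eta>0$, and apply (C5') to the displacement $h:=y-\hbar$ with radius $\bar\eta/4$; this produces $n$, radii $\eta'_1,\dots,\eta'_n>0$ and centers $a_1,\dots,a_n\in H\setminus\{0\}$ with $0\notin\overline{B(a_i,\eta'_i)}$, $\nu(B(a_i,\eta'_i))>0$, and $\sum_{i=1}^n B(a_i,\eta'_i)\subset B(y-\hbar,\bar\eta/4)$. Because $\sigma(q,l)=l$ does not depend on $q$, the iterates in (C5) are simply $q_i=\hbar+\sum_{j\le i}l_j$, and the inclusion bullet reduces to the elementary requirement $\eta_{i+1}\ge\eta_i+\epsilon_{i+1}$. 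I would fix the jumps $l_i$ once and for all (they may not depend on $\eta$), and then, for each $\eta\in(0,\bar\eta/2)$, choose $\eta_0$ small, set $\eta_{i+1}=\eta_i+\epsilon_{i+1}$, and pick $\epsilon_i$ so small that $\sum_i\epsilon_i+\eta_0\le\eta\wedge(\bar\eta/4)$. This makes $\eta_n\le\eta$ with the chain inclusions holding, and since $q_n=\hbar+\sum_i l_i$ will satisfy $\|q_n-y\|<\bar\eta/4$, we obtain $\|q_n-y\|+\eta_n<\bar\eta/2$, i.e. $B(q_n,\eta_n)\subset B(y,\bar\eta/2)$; the two remaining bullets, $\nu(B(l_i,\epsilon_i))>0$ and $\bigcup_i B(l_i,\epsilon_i)\subset Z_{m_0}$, then follow from the choice of $l_i$ and the fact that finitely many fixed nonzero points stay uniformly away from $0$.

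The main obstacle --- and the only genuinely nontrivial point --- is that in (C5) the jumps $l_i$ must be chosen \emph{before} $\eta$, yet (C5) then demands $\nu(B(l_i,\epsilon_i))>0$ for the arbitrarily small radii $\epsilon_i\downarrow0$ forced by $\eta\downarrow0$, whereas (C5') only furnishes positive mass for the single radius $\eta'_i$. I would resolve this by replacing each center $a_i$ with a point $l_i\in \mathrm{supp}(\nu)\cap B(a_i,\eta'_i)$; such a point exists because $\nu(B(a_i,\eta'_i))>0$ and $H$ is separable (if the support missed the ball, covering it by zero-measure neighborhoods and extracting a countable subcover would force $\nu(B(a_i,\eta'_i))=0$, a contradiction). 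Being in the support, $l_i$ satisfies $\nu(B(l_i,\epsilon))>0$ for every $\epsilon>0$; being in $B(a_i,\eta'_i)$, it keeps $\sum_i l_i\in B(y-\hbar,\bar\eta/4)$ and hence $\|q_n-y\|<\bar\eta/4$; and $0\notin\overline{B(a_i,\eta'_i)}$ forces $l_i\ne0$, so that $l_i\in Z_m$ and $B(l_i,\epsilon_i)\subset Z_{m_0}$ for suitable $m\le m_0$. With (C5) thus verified, Theorem \ref{thmmulti-Yang} yields strong irreducibility of $\{u^x\}_{x\in H}$; alternatively, (C5') is precisely the hypothesis under which \cite[Theorem 2.2]{WYZZ} applies directly.
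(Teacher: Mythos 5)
Your proposal is correct and follows the same route the paper intends: Theorem \ref{thmmulti-Yang1} is stated as an application of Theorem \ref{thmmulti-Yang} (equivalently of \cite[Theorem 2.2]{WYZZ}), with (C2)--(C4) trivial for $\sigma(\cdot,z)\equiv z$ and the only substantive point being that (C5') implies (C5). The paper simply defers that implication to \cite{WYZZ}, whereas you supply it explicitly --- and your key device of moving each center $a_i$ to a point of $\mathrm{supp}(\nu)\cap B(a_i,\eta_i')$ (which is nonempty by separability) so that $\nu(B(l_i,\epsilon))>0$ for \emph{every} $\epsilon>0$ is exactly what is needed to handle the quantifier order in (C5), where the $l_i$ are fixed before $\eta$.
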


For any measure $\rho$ on $H$, its support $S_\rho=S(\rho)$ is defined to be the set of $x\in H$ such that $\rho(G)>0$ for any open set $G$ containing $x$.
Set
\begin{equation}\label{a-1}
H_0:=\Big\{\sum_{i=1}^n m_ia_i,\ n,m_1,...,m_n\in\mathbb{N},\ a_i\in S_\nu\Big\}.
\end{equation}
Finally, we point out the fact based on \cite[Subsection 4.1]{WYZZ} that Condition (C5') holds if and only if
$H_0$ is dense in $H$.

\vskip 0.3cm
\begin{rmk}
 The conditions placed on  the driving noises, i.e., conditions (C5) and (C5'), are very mild, and many  examples satisfying these conditions  can be found in \cite[Subsections 4.1 and 4.2]{WYZZ}. In particular, for the case of additive noise, the driving noises include a large class of  L\'evy processes with heavy tails, i.e., the intensity measures $\nu$ satisfy for some $\alpha\in(0,2]$, $\int_{\|z\|_H>1}\|z\|_H^\alpha\nu(dz)=\infty$, e.g., the cylindrical symmetric and nonsymmetric $\alpha$-stable processes with $\alpha\in(0,2)$, and  subordinated cylindrical Wiener processes with a $\alpha/2$-stable subordinator, $\alpha\in(0,2)$, etc. The driving noises even include processes  whose intensity measures $\nu$ satisfy for any small $\alpha>0$, $\int_{\|z\|_H>1}\|z\|_H^\alpha\nu(dz)=\infty$. Also included is a large class of compound Poisson processes, which is somewhat surprising.  For more details, see \cite[Section 4]{WYZZ}.
\end{rmk}

\section{Accessibility}
In this section, we  study the accessibility of (\ref{meq1-additive case}). The driving noises could be degenerate.


We need to consider
\begin{eqnarray}\label{yang-5}
  &&dU^{\epsilon}_t= \big((\alpha_1+i\beta_1)\Delta U^{\epsilon}_t+(\alpha_2+i\beta_2)|U^{\epsilon}_t|^{2\theta}U^{\epsilon}_t+\lambda U^{\epsilon}_t\big)dt+\int_{0<\|z\|\leq \epsilon}z \tilde{N}(dt,dz),~~\epsilon\in(0,1)\nonumber\\
  &&U^{\epsilon}_t=0 \text{ on } \partial D, \forall  t\geq 0,
\end{eqnarray}
and
\begin{eqnarray}\label{yang-4}
  &&dU_t= \big((\alpha_1+i\beta_1)\Delta U_t+(\alpha_2+i\beta_2)|U_t|^{2\theta}U_t+\lambda U_t\big)dt,\nonumber\\
  &&U_t=0 \text{ on }\partial D, \forall t\geq 0.
\end{eqnarray}

By Theorem \ref{thm1}, for any $x\in H$, there exist unique global solutions  $u^x$, $U^{\epsilon,x}$ and $U^x$ to (\ref{meq1-additive case}), (\ref{yang-5}) and (\ref{yang-4}) starting from $x$, respectively.
The following condition is required to study the accessibility.

{\bf (C6)}: $\lambda\leq \alpha_1\lambda_1$. Here and in the following, $\lambda_1$ is the first eigenvalue of $-\Delta$.
\begin{thm}\label{thm2}
Under Conditions (C1) and (C6), assume that $\nu$ is symmetric,  the solution $\{u^x\}_{x\in H}$ of (\ref{meq1-additive case}) is accessible to zero.
\end{thm}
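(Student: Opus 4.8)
The plan is to verify the two ingredients demanded by the accessibility criterion from \cite{WYZZ,WYZZ1} (the pure-jump companion of the strong-irreducibility criterion \cite[Theorem 2.1]{WYZZ} used above): first, that the noise-free flow (\ref{yang-4}) is globally asymptotically stable at $0$; and second, that the small-jump flow (\ref{yang-5}) approximates (\ref{yang-4}) on bounded time intervals as $\epsilon\downarrow0$. These are then glued together by a ``no large jumps'' coupling that identifies $u^x$ with $U^{\epsilon,x}$ on a positive-probability event, and it is precisely in this coupling that the symmetry of $\nu$ is used. Together they will force $u^x$ to enter any ball $B(0,r)$ with positive probability over a time set of positive Lebesgue measure, whence $R_\lambda(x,B(0,r))>0$.

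\textbf{Step 1 (deterministic decay).} For (\ref{yang-4}) I would compute, using the Dirichlet condition and integration by parts,
\begin{equation*}
\frac{d}{dt}\|U_t\|^2=-2\alpha_1\|U_t\|_V^2+2\alpha_2\|U_t\|_{L^{2\theta+2}}^{2\theta+2}+2\lambda\|U_t\|^2 ,
\end{equation*}
where the imaginary parts $\beta_1,\beta_2$ drop out because $\langle U_t,\Delta U_t\rangle$ and $\langle U_t,|U_t|^{2\theta}U_t\rangle$ are real. The Poincar\'e inequality $\|U_t\|_V^2\ge\lambda_1\|U_t\|^2$ together with (C6) makes the linear contribution $2(\lambda-\alpha_1\lambda_1)\|U_t\|^2\le0$, while $\alpha_2<0$ and the bounded-domain embedding $\|U_t\|\le C_D\|U_t\|_{L^{2\theta+2}}$ give $2\alpha_2\|U_t\|_{L^{2\theta+2}}^{2\theta+2}\le -c\,(\|U_t\|^2)^{\theta+1}$. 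Hence $y(t):=\|U_t\|^2$ satisfies $y'\le -c\,y^{\theta+1}$, so $y(t)\to0$; in particular $t\mapsto\|U^x_t\|$ is non-increasing and tends to $0$ for every $x\in H$.

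\textbf{Step 2 (small-noise approximation).} For the difference $U^{\epsilon,x}_t-U^x_t$ I would apply It\^o's formula to $\|U^{\epsilon,x}_t-U^x_t\|^2$ exactly as in (\ref{yang-2})--(\ref{yang-10}): Lemma \ref{lemma1} with $\gamma=2\theta+2$ and Condition (C1) render the nonlinear difference term nonpositive, the $\Delta$-term is nonpositive, and the $\lambda$-term is absorbed by Gronwall. The only genuinely new contribution is the jump energy $\int_{0<\|z\|\le\epsilon}\|z\|^2\nu(dz)$, arising as the compensator of the quadratic variation of $\int_{0<\|z\|\le\epsilon}z\,\tilde N(dt,dz)$, and this tends to $0$ as $\epsilon\downarrow0$ since $\int_H(\|z\|_H^2\wedge1)\nu(dz)<\infty$. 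A Doob/BDG estimate then yields, for each fixed $T$, $\EE\big[\sup_{t\le T}\|U^{\epsilon,x}_t-U^x_t\|^2\big]\to0$, so that $\mathbb{P}\big(\sup_{t\le T}\|U^{\epsilon,x}_t-U^x_t\|<\delta\big)\to1$ as $\epsilon\downarrow0$.

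\textbf{Step 3 (assembly, and the main difficulty).} Fix $r>0$ and $x\in H$. By Step 1 choose $T$ with $\|U^x_T\|<r/3$; by monotonicity $\sup_{t\in[T,T']}\|U^x_t\|<r/3$ for any $T'>T$. By Step 2 choose $\epsilon$ small so that $p:=\mathbb{P}\big(\sup_{t\le T'}\|U^{\epsilon,x}_t-U^x_t\|<r/3\big)>0$. Let $A=\{\text{no jump of size }>\epsilon\text{ on }[0,T']\}$, whose probability $e^{-T'\nu(\{\|z\|>\epsilon\})}$ is strictly positive because $\nu(\{\|z\|>\epsilon\})<\infty$. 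On $A$ the large jumps contribute nothing and---this is the crux---the compensator drift $-\int_{\epsilon<\|z\|\le1}z\,\nu(dz)\,ds$ of the intermediate jumps \emph{vanishes because $\nu$ is symmetric}; hence $u^x$ and $U^{\epsilon,x}$ solve the same equation with the same small-jump noise on $[0,T']$ and coincide there by pathwise uniqueness. Since the small-jump noise driving $U^{\epsilon,x}$ and the event $A$ are generated by independent restrictions of $N$ to disjoint sets, independence gives $\mathbb{P}\big(\sup_{t\in[T,T']}\|u^x_t\|<2r/3\big)\ge p\cdot\mathbb{P}(A)>0$, so $\mathbb{P}(u^x_t\in B(0,r))>0$ on $[T,T']$ and therefore $R_\lambda(x,B(0,r))>0$. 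The main obstacle is exactly this coupling: one must strip off the large and intermediate jumps while retaining the identity $u^x=U^{\epsilon,x}$, and it is the symmetry hypothesis that annihilates the otherwise-present intermediate-jump drift. Establishing Step 1's decay in the critical case $\lambda=\alpha_1\lambda_1$, where the linear dissipation degenerates and one must rely on the nonlinear term $\alpha_2\|U_t\|_{L^{2\theta+2}}^{2\theta+2}$, is the secondary technical point.
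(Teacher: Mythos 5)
Your proposal is correct, but it follows a genuinely different route from the paper in its final step. The paper does not perform the gluing itself: it invokes the abstract accessibility criterion \cite[Theorem 2.1]{WYZZ1} and reduces the theorem to three conditions --- (A1) the decay $\lim_{t\to\infty}\|U^x_t\|=0$ for the noise-free flow (\ref{yang-4}), (A2) convergence in probability $\|U^{\epsilon,x}_t-U^x_t\|\to0$ at each fixed $t$, and (A3) a uniform ``stay near zero'' estimate $\inf_{y\in B(0,\zeta)}\mathbb{P}(\sup_{s\le t}\|u^y_s\|\le\eta)>0$, which it borrows from Claim 1 in the proof of Theorem \ref{thmmulti-Yang}. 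Your Steps 1 and 2 coincide with (A1) and (A2) (your Step 2 is in fact slightly stronger, giving a supremum-in-time estimate via BDG, whereas the paper only needs and proves $\mathbb{E}\|U^{\epsilon,x}_T-U^x_T\|^2\to0$ pointwise). Your Step 3 then replaces both (A3) and the citation of \cite{WYZZ1} by a direct coupling: on the event of no jumps of size exceeding $\epsilon$, the symmetry of $\nu$ annihilates the compensator drift of the intermediate band, so $u^x$ and $U^{\epsilon,x}$ coincide by pathwise uniqueness, and independence of the restrictions of $N$ to disjoint regions yields the positive lower bound. This is sound --- accessibility only requires a statement for each fixed $x$, so no uniformity of the type (A3) is needed in this additive, symmetric setting --- and it makes the role of the symmetry hypothesis explicit, whereas in the paper that hypothesis is simply passed into the cited criterion. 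What the paper's route buys is brevity and reuse of machinery already established (Claim 1 and \cite{WYZZ1}); what yours buys is a self-contained argument that essentially reproves the relevant special case of \cite[Theorem 2.1]{WYZZ1}.
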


%
%
%
%
%

\begin{proof}  According to \cite[Theorem 2.1]{WYZZ1}, to complete the proof of this theorem, we only need to verify the following claims:
\begin{itemize}

 \item[{\bf (A1)}] For any $x \in H$, $\lim_{t \rightarrow \infty} \| U_t^x \|= 0$.

 \item[{\bf (A2)}] For any $t>0$ and $x \in H$, $\lim_{\epsilon \rightarrow 0} \|U^{\epsilon,x}_t - U^{x}_t \|= 0$ in probability.

  \item[{\bf (A3)}] For any $\eta>0$, there exist $(\zeta,t)=(\zeta(\eta),t(\eta))\in (0,\frac{\eta}{2}]\times (0,\infty)$ such that\\
$ \inf_{y \in B(0,\zeta)}\mathbb{P}(\sup_{s\in[0,t]}\|u^y_s\|\leq \eta)>0$.
Here $B(0,\zeta)=\{h\in H:\|h\|<\zeta\}$.
\end{itemize}
Now, we verify Claim \textbf{(A1)}.

Applying the chain rule gives
\begin{eqnarray*}
  &&\|U_t^x\|^2= \|x\|^2-2\alpha_1\int_0^t\|U_s^x\|_V^2ds+2\lambda\int_0^t\|U_s^x\|^2ds+2\alpha_2\int_0^t\|U_s^x\|^{2\theta+2}_{L^{2\theta+2}}ds.
\end{eqnarray*}
By (C6), $\alpha_2<0$, the Poinc\'are inequality and  H\"{o}lder inequality,
\begin{eqnarray}\label{5-4}
 \frac{d\|U_t^x\|^2}{dt}&=&-2\alpha_1\|U_t^x\|_V^2+2\lambda\|U_t^x\|^2+2\alpha_2\|U_t^x\|^{2\theta+2}_{L^{2\theta+2}}\nonumber\\
  &&\leq 2\alpha_2\|U_t^x\|^{2\theta+2}_{L^{2\theta+2}}\leq\frac{2\alpha_2}{m(D)^{\theta}}\|U_t^x\|^{2\theta+2},
\end{eqnarray}
where $m(D)$ is the Lebesgue measure of $D$. Applying $\alpha_2<0$ again,  (\ref{5-4}) implies that the map $t\rightarrow\|U_t^x\|^2$ is decreasing. Then by a  contradiction argument, it is easy to see that
\begin{equation}\label{4-1}
\lim_{t\rightarrow +\infty} \|U_t^x\|^2=0.
\end{equation}
Using an argument similar to that used in the proof of Theorem \ref{thmmulti-Yang},
for any $T>0$,
\begin{equation}\label{4-2}
\lim_{\epsilon\rightarrow 0}\mathbb{E}\|U^{\epsilon,x}_T-U^x_T\|^2=0,
\end{equation}
which implies Claim \textbf{(A2)}.
Claim \textbf{(A3)} follows from \textbf{Claim 1} in the proof of Theorem \ref{thmmulti-Yang}.

The proof of Theorem \ref{thm2} is complete.
\end{proof}
\section{Applications}
In this section, we apply the irreducibility obtained above to study stochastic complex Ginzburg-Landau equations driven by pure jump noise.

%

We first investigate the existence of the invariant measure.

\subsection{Existence of the invariant measure}
To prove the existence of the invariant measure, we need the following condition on $\sigma$:

{\bf (C7)}: Assume that there exists $\hat{\theta}\in(0,1]$, $r\in[0,2\theta+\hat{\theta})$ and positive constants $k_4,~k_5$ such that  for all $u\in H$
\begin{equation*}
  \int_{Z_1}\|\sigma(u,z)\|^{\hat{\theta}}\nu(dz)+\int_{Z_1^c}\|\sigma(u,z)\|^2\nu(dz)\leq k_4\|u\|^r+k_5.
\end{equation*}

\begin{thm}\label{thmmulti-02}
Under Conditions (C1)-C4) and (C7), there exists at least one invariant measure for (\ref{1}).
\end{thm}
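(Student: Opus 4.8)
The plan is to run the Krylov--Bogoliubov procedure. Since Theorem \ref{thm1} and the discussion following it already provide the Feller property of the transition semigroup $\{P_t\}$ of $\{u^x\}_{x\in H}$, it suffices to prove tightness of the time-averaged measures $R_T(x,\cdot):=\frac1T\int_0^T P_s(x,\cdot)\,ds$, $T\ge1$; any weak limit point as $T\to\infty$ is then an invariant measure. Because bounded sets of $H=L^2$ are not relatively compact, the tightness must be extracted from the compact embedding $V=H_0^1\hookrightarrow H$, so the real task is a uniform-in-$T$ estimate controlling $\|u_s^x\|_V$ on average.

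The heart of the argument is a Foster--Lyapunov estimate built from the \emph{sub-quadratic} functional $\phi_\delta(u):=(\delta+\|u\|^2)^{\hat\theta/2}$, $\delta>0$, with $G(s):=(\delta+s)^{\hat\theta/2}$. This choice is forced by Condition (C7): over the large-jump region $Z_1$ only the $\hat\theta$-th moment of $\sigma$ is integrable (heavy tails), and the subadditivity of $r\mapsto r^{\hat\theta}$ combined with the monotonicity of $\phi_\delta$ gives the clean pointwise bound $\phi_\delta(u+\sigma(u,z))-\phi_\delta(u)\le\|\sigma(u,z)\|^{\hat\theta}$, so that the large-jump term integrates, via (C7), to at most $k_4\|u\|^r+k_5$. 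For the compensated small jumps the smoothness of $\phi_\delta$ (for $\delta>0$) makes the It\^o correction of order $\int_{Z_1^c}\|\sigma(u,z)\|^2\,\nu(dz)$, again controlled by (C7). Differentiating $\phi_\delta$ along the drift of (\ref{1}), and using $\mathrm{Re}\langle u,\Delta u\rangle=-\|u\|_V^2$ and $\mathrm{Re}\langle u,|u|^{2\theta}u\rangle=\|u\|_{L^{2\theta+2}}^{2\theta+2}$, produces the dissipative terms $-2\alpha_1 G'(\|u\|^2)\|u\|_V^2$ and $2\alpha_2 G'(\|u\|^2)\|u\|_{L^{2\theta+2}}^{2\theta+2}$, where $G'(s)=\tfrac{\hat\theta}{2}(\delta+s)^{\hat\theta/2-1}>0$. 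Since $\alpha_2<0$ and $\|u\|_{L^{2\theta+2}}^{2\theta+2}\ge C\|u\|^{2\theta+2}$ on the bounded domain $D$ (as in (\ref{5-4})), the second term behaves like $-c\|u\|^{2\theta+\hat\theta}$ for large $\|u\|$, which absorbs both the linear term $2\lambda\|u\|^2$ (so Condition (C6) is not needed here) and the noise growth $k_4\|u\|^r$; the exponent condition $r<2\theta+\hat\theta$ in (C7) is exactly what lets Young's inequality close the estimate, yielding
\[
\mathcal L\phi_\delta(u)\ \le\ -c_1(\delta+\|u\|^2)^{\hat\theta/2-1}\|u\|_V^2\ -\ c_2\|u\|^{2\theta+\hat\theta}\ +\ C .
\]

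Integrating this in expectation (after localizing to make the compensated-jump and drift martingale parts genuine martingales, then passing to the limit by Fatou) gives, uniformly in $T\ge1$, both a moment bound $\frac1T\int_0^T\mathbb{E}\|u_s^x\|^{2\theta+\hat\theta}\,ds\le C(1+\|x\|^{\hat\theta})$ and a weighted gradient bound $\frac1T\int_0^T\mathbb{E}\big[(\delta+\|u_s^x\|^2)^{\hat\theta/2-1}\|u_s^x\|_V^2\big]\,ds\le C(1+\|x\|^{\hat\theta})$. Tightness in $H$ then follows by a two-region split: on $\{\|u_s\|\le M\}$ the weight is bounded below, so the gradient bound controls $\frac1T\int_0^T\mathbb{P}(\|u_s\|\le M,\ \|u_s\|_V>R)\,ds\le C_M/R^2$, while the moment bound gives $\frac1T\int_0^T\mathbb{P}(\|u_s\|>M)\,ds\le C/M^{2\theta+\hat\theta}$; choosing $M$ then $R$ large makes $\frac1T\int_0^T\mathbb{P}(u_s^x\notin K_R)\,ds$ arbitrarily small for the compact $V$-balls $K_R$, uniformly in $T$, and Krylov--Bogoliubov delivers the invariant measure.

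I expect the main obstacle to be precisely the heavy-tailed large jumps: the mere $\hat\theta$-integrability forces the sub-quadratic Lyapunov function and hence only a \emph{degenerate} (weighted) control of $\|u\|_V$, so that tightness cannot be read off from a single second-moment bound but must be assembled by coupling the low-order moment estimate with the weighted gradient estimate. Making the It\^o/Dynkin computation for $\phi_\delta$ rigorous (localization, integrability of the jump terms, and the $\delta\downarrow0$ bookkeeping) is the routine-but-delicate part; conceptually everything hinges on the strongly dissipative nonlinearity $(\alpha_2+i\beta_2)|u|^{2\theta}u$ dominating the noise throughout the range $r<2\theta+\hat\theta$.
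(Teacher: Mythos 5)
Your proposal is correct and follows essentially the same route as the paper: the same sub-quadratic Lyapunov functional $(1+\|u\|^2)^{\hat\theta/2}$, the same use of $\hat\theta$-subadditivity for the large jumps and second-order Taylor control for the compensated small jumps, the same absorption of the noise growth by the dissipative nonlinearity via $r<2\theta+\hat\theta$, and Krylov--Bogoliubov with the compact embedding $V\hookrightarrow H$. The only cosmetic difference is the last step: the paper converts the weighted gradient bound directly into $\mathbb{E}\int_0^t\|u_s^x\|_V^{\hat\theta}\,ds\leq C(1+f(x)+t)$ using the Poincar\'e inequality, rather than your two-region split combining a separate moment bound with the weighted estimate; both yield the required tightness.
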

\begin{proof}
The proof is in the spirit of \cite{Dong}. Define a function $f$ on $H$ by
\begin{equation*}
f(u)=(\|u\|^2+1)^{\frac{\hat\theta}{2}}.
\end{equation*}
By simple calculations,  for any $u,v\in H$
\begin{equation}\label{3-8}
|f(u)-f(v)|\leq|(\|u\|^2+1)^{\frac{1}{2}}-(\|v\|^2+1)^{\frac{1}{2}}|^{\hat\theta}\leq\|u-v\|^{\hat\theta},
\end{equation}
and
\begin{equation}\label{eq zhai 05}
\nabla f(u)=\frac{\hat\theta u}{(\|u\|^2+1)^{1-\frac{\hat\theta}{2}}},\quad \nabla^2 f(u)=\frac{\hat\theta\Sigma_{i=1}^{\infty}e_i\otimes e_i}{(\|u\|^2+1)^{1-\frac{\hat\theta}{2}}}-\frac{\hat\theta(2-\hat\theta)u\otimes u}{(\|u\|^2+1)^{1-\frac{\hat\theta}{2}}}.
\end{equation}
Here $\{e_i,i\in \mathbb{N}\}$ is an orthonormal basis of $H$.

Fix $x\in H$. Let $u^x$ be the unique solution to (\ref{1}) with initial data $x$. Applying the It\^{o} formula gives
\begin{eqnarray}\label{3-9}
f(u_t^{x})
&=&f(x)-\int_0^t\frac{\alpha_1\hat\theta\|u_s^{x}\|_V^2}{(\|u_s^{x}\|^2+1)^{1-\frac{\hat\theta}{2}}}ds\nonumber\\
&&+\int_0^t\frac{\alpha_2\hat\theta\|u_s^{x}\|_{L^{2\theta+2}}^{2\theta+2}}{(\|u_s^{x}\|^2+1)^{1-\frac{\hat\theta}{2}}}ds+\int_0^t\frac{\hat\theta\lambda\|u_s^{x}\|^2}{(\|u_s^{x}\|^2+1)^{1-\frac{\hat\theta}{2}}}ds\nonumber\\
&&+\int_0^t\int_{Z_1^c}[\|u_{s-}^{x}+\sigma(u_{s-}^{x},z)\|^2+1]^{\frac{\hat\theta}{2}}-[\|u_{s-}^{x}\|^2+1]^\frac{\hat\theta}{2}\tilde N(ds,dz)\nonumber\\
&&+\int_0^t\int_{Z_1}[\|u_{s-}^{x}+\sigma(u_{s-}^{x},z)\|^2+1]^{\frac{\hat\theta}{2}} -[\|u_{s-}^{x}\|^2+1]^{\frac{\hat\theta}{2}}N(ds,dz)\\
&&+\int_0^t\int_{Z_1^c}[\|u_s^{x}+\sigma(u_s^{x},z)\|^2+1]^{\frac{\hat\theta}{2}} -[\|u_s^{x}\|^2+1]^{\frac{\hat\theta}{2}}\nonumber
-{\rm{Re}}\frac{\hat\theta\langle u_s^{x},\sigma(u_s^{x},z)\rangle}{(\|u_s^{x}\|^2+1)^{1-\frac{\hat\theta}{2}}}\nu(dz)ds.
\end{eqnarray}
Using a standard stopping time argument, by Condition (C7), (\ref{3-8}), (\ref{eq zhai 05}), and  Taylor's expansion,
\begin{eqnarray}\label{yang-14}
&&\mathbb{E}f(u_t^{x})+\mathbb{E}\int_0^t\frac{\alpha_1\hat\theta\|u_s^{x}\|_V^2}{(\|u_s^{x}\|^2+1)^{1-\frac{\hat\theta}{2}}}ds\nonumber\\
&\leq&\!\!\!f(x)
+\mathbb{E}\int_0^t\frac{\alpha_2\hat\theta\|u_s^{x}\|_{L^{2\theta+2}}^{2\theta+2}}{(\|u_s^{x}\|^2+1)^{1-\frac{\hat\theta}{2}}}ds+\mathbb{E}\int_0^t\frac{\hat\theta\lambda\|u_s^{x}\|^2}{(\|u_s^{x}\|^2+1)^{1-\frac{\hat\theta}{2}}}ds\nonumber\\
&&+\mathbb{E}\int_0^t\int_{Z_1}[\|u_{s-}^{x}+\sigma(u_{s-}^{x},z)\|^2+1]^{\frac{\hat\theta}{2}} -[\|u_{s-}^{x}\|^2+1]^{\frac{\hat\theta}{2}}N(ds,dz)\nonumber\\
&&+\mathbb{E}\int_0^t\int_{Z_1^c}[\|u_s^{x}+\sigma(u_s^{x},z)\|^2+1]^{\frac{\hat\theta}{2}} -[\|u_s^{x}\|^2+1]^{\frac{\hat\theta}{2}}\nonumber
-{\rm{Re}}\frac{\hat\theta\langle u_s^{x},\sigma(u_s^{x},z)\rangle}{(\|u_s^{x}\|^2+1)^{1-\frac{\hat\theta}{2}}}\nu(dz)ds\nonumber\\
\!\!\!
&\leq&\!\!\!f(x)
+
\mathbb{E}\int_0^t\frac{\alpha_2\hat\theta\|u_s^{x}\|_{L^{2\theta+2}}^{2\theta+2}+\hat\theta\lambda\|u_s^{x}\|^2}{(\|u_s^{x}\|^2+1)^{1-\frac{\hat\theta}{2}}}ds\nonumber\\
&&+\mathbb{E}\int_0^t\int_{Z_1}\|\sigma(u_s^{x},z)\|^{\hat\theta}\nu(dz)ds+\mathbb{E}\int_0^t\int_{Z_1^c}\|\sigma(u_s^{x},z)\|^2\nu(dz)ds\nonumber\\
\!\!\!
&\leq&\!\!\!f(x)
+
\mathbb{E}\int_0^t\frac{\alpha_2\hat\theta\|u_s^{x}\|_{L^{2\theta+2}}^{2\theta+2}+\hat\theta\lambda\|u_s^{x}\|^2}{(\|u_s^{x}\|^2+1)^{1-\frac{\hat\theta}{2}}}ds
+
\mathbb{E}\int_0^t(k_4\|u_s^{x}\|^r+k_5)ds\\
\!\!\!
&\leq&\!\!\!f(x)
+
\mathbb{E}\int_0^t
   \frac{\alpha_2\hat\theta\|u_s^{x}\|_{L^{2\theta+2}}^{2\theta+2}+\hat\theta\lambda\|u_s^{x}\|^2+k_6\|u_s^{x}\|^{r+2-\hat\theta}+k_7}
         {(\|u_s^{x}\|^2+1)^{1-\frac{\hat\theta}{2}}}ds.\nonumber
\end{eqnarray}
Since $\alpha_2<0$ and $(r+2-\hat\theta)<2\theta+2$, there exists a constant $C$ such that
\begin{eqnarray*}
\alpha_2\hat\theta\|u_s^{x}\|_{L^{2\theta+2}}^{2\theta+2}+\hat\theta\lambda\|u_s^{x}\|^2
+ k_6\|u_s^{x}\|^{r+2-\hat\theta}+k_7\leq C.
\end{eqnarray*}
Hence (\ref{yang-14}) implies that
\begin{equation*}
\mathbb{E}f(u_t^{x})+\mathbb{E}\int_0^t\frac{\alpha_1\hat\theta\|u_s^{x}\|_V^2}{(\|u_s^{x}\|^2+1)^{1-\frac{\hat\theta}{2}}}ds\leq f(x)+Ct.
\end{equation*}
Therefore,
\begin{equation*}
\mathbb{E}\int_0^t\|u_s^{x}\|_V^{\hat\theta}ds
\leq
\mathbb{E}\int_0^t\frac{\|u_s^{x}\|_V^{\hat\theta}(\|u_s^{x}\|^{2-\hat\theta}+1)}{(\|u_s^{x}\|^2+1)^{1-\frac{\hat\theta}{2}}}ds
\leq
C\mathbb{E}\int_0^t\frac{\|u_s^{x}\|_V^2+1}{(\|u_s^{x}\|^2+1)^{1-\frac{\hat\theta}{2}}}ds
\leq C(1+f(x)+t).
\end{equation*}
By the classical Bogoliubov-Krylov argument(cf.\cite{Da1}), the above inequality implies  the existence of the invariant measure.
\end{proof}

\subsection{Ergodicity: The case of additive noise}
In this subsection, we will consider the uniqueness of the invariant measure of (\ref{meq1-additive case}) with the so-called weakly dissipative condition, i.e., $\lambda=\alpha_1\lambda_1$. The driving noises could be degenerate. The following theorem is the main result of this subsection.
%
%

\begin{thm}\label{thm3}
Assume that Condition (C1) and $\lambda=\alpha_1\lambda_1$ hold.  If one of the following conditions is satisfied
\begin{itemize}
  \item Condition (C5') holds;
  \item $\nu$ is symmetric;
\end{itemize}
then there exists at most one invariant measure for (\ref{meq1-additive case}). If, moreover,
\begin{eqnarray}\label{5-1}
\exists~\hat{\theta}\in(0,1],\quad \int_{Z_1}\|z\|^{\hat{\theta}}\nu(dz)<\infty,
\end{eqnarray}
then there exists a unique invariant measure for (\ref{meq1-additive case}).
\end{thm}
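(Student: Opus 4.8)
The plan is to split the statement into two parts: the uniqueness assertion (``at most one invariant measure'') under the weakly dissipative identity $\lambda=\alpha_1\lambda_1$, and the existence assertion under the extra moment hypothesis \eqref{5-1}; the two then combine to give a unique invariant measure.

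For uniqueness, my strategy is to establish the $e$-property of the Markov semigroup $(P_t)$ associated with \eqref{meq1-additive case} and combine it with accessibility to zero. The key observation is that, in the additive case, two solutions $u^x$ and $u^y$ driven by the \emph{same} noise $L$ have a difference $w_t:=u^x_t-u^y_t$ solving a random PDE with no stochastic integral term, since the noise cancels. Applying the chain rule to $\|w_t\|^2$, the Laplacian contributes $-2\alpha_1\|w_t\|_V^2$ (its imaginary part vanishes upon integration by parts under the Dirichlet condition), while the nonlinear term is controlled exactly as in \eqref{yang-3} via Lemma~\ref{lemma1} and Condition (C1), hence is nonpositive. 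Using the Poincar\'e inequality $\|w_t\|_V^2\geq\lambda_1\|w_t\|^2$ together with $\lambda=\alpha_1\lambda_1$, one gets
\begin{equation*}
\frac{d}{dt}\|w_t\|^2\leq 2(\lambda-\alpha_1\lambda_1)\|w_t\|^2=0,
\end{equation*}
so that $\|u^x_t-u^y_t\|\leq\|x-y\|$ almost surely for all $t\geq0$. Consequently, for every bounded Lipschitz $\phi$ with constant $L$ we have $|P_t\phi(x)-P_t\phi(y)|\leq L\,\EE\|u^x_t-u^y_t\|\leq L\|x-y\|$ uniformly in $t$, which is precisely the $e$-property.

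Next I would obtain accessibility of zero under either branch of the hypothesis: if (C5') holds, the strong irreducibility of Theorem~\ref{thmmulti-Yang1} yields accessibility to $0$; if $\nu$ is symmetric, then $\lambda=\alpha_1\lambda_1$ satisfies (C6) and Theorem~\ref{thm2} gives accessibility to $0$ directly. With the $e$-property in hand and a common accessible point, I would invoke the standard uniqueness criterion for semigroups enjoying the $e$-property (in the spirit of \cite{Komorowski, Kapica}): the $e$-property together with accessibility to a single point forces any two invariant measures to coincide, yielding ``at most one invariant measure.'' For existence under the additional assumption \eqref{5-1}, I would appeal to Theorem~\ref{thmmulti-02}. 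In the additive case $\sigma(\cdot,z)\equiv z$, Condition (C7) reduces, with $r=0$, to $\int_{Z_1}\|z\|^{\hat\theta}\nu(dz)+\int_{Z_1^c}\|z\|^2\nu(dz)<\infty$; the second integral is finite automatically since $\nu$ is a L\'evy measure and $Z_1^c=\{0<\|z\|_H\leq1\}$, while the first is exactly \eqref{5-1}. Thus Theorem~\ref{thmmulti-02} furnishes at least one invariant measure, and combined with the uniqueness just proved this gives a unique invariant measure.

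The main obstacle is the uniqueness half, and within it the passage from the $e$-property plus accessibility to genuine uniqueness. The contraction estimate is the crux: it depends essentially on the additive structure, so that the noise cancels in $w_t$, and on the borderline identity $\lambda=\alpha_1\lambda_1$, which makes the dissipation exactly balance the linear growth (any strict inequality would only help, but the weakly dissipative equality is the delicate case). I expect the remaining verification---that the cited $e$-property ergodicity theorem applies in this infinite-dimensional, non-locally-compact Hilbert-space setting with only resolvent-type accessibility---to require the most care.
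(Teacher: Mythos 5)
Your proposal is correct and follows essentially the same route as the paper: derive the pathwise contraction $\|u^x_t-u^y_t\|\leq\|x-y\|$ from the cancellation of the additive noise, the estimate \eqref{5-3} for the nonlinearity, and the Poincar\'e inequality with $\lambda=\alpha_1\lambda_1$; deduce the $e$-property; combine it with irreducibility/accessibility from Theorems \ref{thmmulti-Yang1} and \ref{thm2} via the criterion of \cite{Kapica}; and obtain existence from Theorem \ref{thmmulti-02}, whose hypothesis (C7) reduces to \eqref{5-1} in the additive case. Your verification that (C7) holds with $r=0$ is in fact slightly more explicit than the paper's, which simply invokes Theorem \ref{thmmulti-02}.
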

\begin{proof}
For any $x,y\in H$, denote by $u^x,u^y$ the unique solutions to (\ref{meq1-additive case}) with initial data $x,y$, respectively.
Applying the It\^{o} formula gives
\begin{eqnarray}\label{5-2}
\|u_t^{x}-u_t^{y}\|^2
&=&\|x-y\|^2+2\lambda\int_{0}^{t}\|u_s^{x}-u_s^{y}\|^2ds-2\alpha_1\int_0^t\|u_s^{x}-u_s^{y}\|_V^2ds\nonumber\\
&&+2{\rm{Re}}\int_0^t\langle u_s^{x}-u_s^{y},(\alpha_2+i\beta_2)(|u_s^{x}|^{2\theta}u_s^{x}-|u_s^{y}|^{2\theta}u_s^{y})\rangle ds\nonumber\\
&\leq&\|x-y\|^2.
\end{eqnarray}
To obtain the last inequality of (\ref{5-2}), we have used $\|u_s^{x}-u_s^{y}\|_V^2\geq \lambda_1 \|u_s^{x}-u_s^{y}\|^2$, $\lambda=\alpha_1\lambda_1$ and the following estimate
\begin{eqnarray}\label{5-3}
2{\rm{Re}}\int_0^t\langle u_s^{x}-u_s^{y},(\alpha_2+i\beta_2)(|u_s^{x}|^{2\theta}u_s^{x}-|u_s^{y}|^{2\theta}u_s^{y})\rangle ds\leq 0.
\end{eqnarray}
The proof of (\ref{5-3}) follows that of (\ref{yang-3}).

(\ref{5-2}) implies that $\{u^x\}_{x\in H}$  satisfies the so-called $e$-property, i.e., for any $x\in H$ and Lipschitz bounded function $\phi:H\rightarrow \mathbb{R}$,
\begin{equation*}
\lim_{y\rightarrow x}\sup_{t\geq0}|\mathbb{E}\phi(u_t^{x})-\mathbb{E}\phi(u_t^{y})|=0.
\end{equation*}

\cite[Theorem 2]{Kapica} implies that if a Markov process has the $e$-property and irreducibility, then the Markov process has at most one invariant measure. Therefore, combining Theorem \ref{thmmulti-Yang1} and Theorem \ref{thm2}, if either  Condition (C5') holds or
$\nu$ is symmetric, then there exists at most one invariant measure for (\ref{meq1-additive case}).

Moreover, if (\ref{5-1}) holds, then Theorem \ref{thmmulti-02} implies the existence of invariant measure for (\ref{meq1-additive case}).

The proof of Theorem \ref{thm3} is complete.
\end{proof}

In the following, we present examples of additive driving noises
satisfying (\ref{5-1}).

\begin{exm}{\bf Cylindrical L\'{e}vy process}

 Let $\{e_i,i\in \mathbb{N}\}$ be an orthonormal basis of $H$. Let $\{(L_i(t))_{t\geq0},i\in \mathbb{N}\}$ be a sequence of independent one-dimensional pure jump L\'{e}vy
processes with the intensity measure $\mu_i$. Choose $\beta_i\in \mathbb{R},i\in\mathbb{N}$ such that
       \begin{eqnarray}\label{Eq ITM 01}
       \sum_{i=1}^\infty\int_{\mathbb{R}}|\beta_ix_i|^2\wedge1\mu_i(dx_i)<\infty,
       \end{eqnarray}
       and there exists a constant $\hat\theta\in(0,1]$ such that
       \begin{eqnarray}\label{Eq ITM 02}
       \sum_{i=1}^\infty\int_{|\beta_ix_i|>1}|\beta_ix_i|^{\hat\theta}\mu_i(dx_i)<\infty.
       \end{eqnarray}
        Then, the intensity measure $\nu$ of $L(t)=\sum_{i=1}^\infty \beta_iL_i(t)e_i, t\geq0$ satisfies (\ref{5-1}). Indeed,
               \begin{eqnarray*}
       \int_{H}\|z\|^2\wedge1\nu(dz)=\sum_{i=1}^\infty\int_{\mathbb{R}}|\beta_ix_i|^2\wedge1\mu_i(dx_i)<\infty,
       \end{eqnarray*}
       and
       \begin{eqnarray*}
       \int_{Z_1}\|z\|^{\hat\theta}\nu(dz)=\sum_{i=1}^\infty\int_{|\beta_ix_i|>1}|\beta_ix_i|^{\hat\theta}\mu_i(dx_i)<\infty.
       \end{eqnarray*}

       There are many concrete examples such that $\sum_{i=1}^\infty 1_{\mathbb{R}\setminus\{0\}}(\beta_i)\mu_i(\mathbb{R})<\infty$,  and then, the driving  L\'evy process $L$ is a compound Poisson process on $H$. Another concrete example is  the so-called cylindrical  $\alpha$-stable processes with $\alpha\in(0,2)$.

\end{exm}

The following example is concerned with the subordination of L\'evy processes, which is an important idea
to obtain new L\'evy processes. Here we just introduce a concrete example, the so-called subordinated cylindrical Wiener process. We refer the reader  to \cite[Example 4.3]{WYZZ} for more details and examples.

\begin{exm}{\bf Subordinated cylindrical Wiener process}

Let $\{e_i,i\in \mathbb{N}\}$ be an orthonormal basis of $H$ and $\{W^i_t,t\geq0;i\in\mathbb{N}\}$ be a sequence of i.i.d. one-dimensional Brownian motions. Let
$\beta_i\in\mathbb{R}, i\in\mathbb{N}$ be given constants satisfying $\sum_{i=1}^\infty\beta_i^2<\infty$. Let $W_t,t\geq0$ be the
Wiener process on $H$ given by
$$
W_t=\sum_{i=1}^\infty \beta_iW^i_te_i.
$$

 For $\alpha\in(0,2)$, let
$S_t,t\geq 0$ be an $\alpha/2$-stable subordinator, i.e., an increasing one-dimensional L\'evy process with Laplace transform
$$
\mathbb{E}e^{-\eta S_t}=e^{-t|\eta|^{\alpha/2}},~~\eta>0.
$$
$W_t,t\geq0$ and $S_t,t\geq 0$ are independent. The subordinated cylindrical Wiener process $L_t, t\geq0$  on $H$ is defined by
$$
L_t:=W_{S_t}=\sum_{i=1}^\infty \beta_iW^i_{S_t}e_i.
$$
Then the intensity measure $\nu$ of $L_t, t\geq0$ is symmetric and  satisfies (\ref{5-1}).

\end{exm}

  We stress that the driving  L\'evy process $L$ introduced in the above two examples could be degenerate; that is, there could exist some $I\subset \mathbb{N}$ such that $\beta_i=0$ for any $i\in I$.

\subsection{Ergodicity: The case of multiplicative noise}
To derive the uniqueness of the invariant measure in the case of multiplicative noise, we require the following conditions:

{\bf (C8)}: Assume that $\sigma$ satisfies for all $v_1,~v_2\in H$
\begin{equation*}
  \|\sigma(v_1,z)-\sigma(v_2,z)\|\leq L(z)\|v_1-v_2\|
\end{equation*}
with the Lipschitz coefficient satisfying $\int_{H} L^2(z)\nu(dz)<\infty$ and $\int_{0<\|z\|<1}L(z)\nu(dz)<\infty$.

{\bf (C9)(Weak dissipation)}:\begin{equation*}
2\int_{Z_1}L(z)\nu(dz)+\int_{H}L^2(z)\nu(dz)+2\lambda\leq2\alpha_1\lambda_1.
\end{equation*}
\vskip 0.3cm
We now state the ergodicity of (\ref{1}) as follows:
\begin{thm}\label{thm4}
Under Conditions (C1)-(C5), (C8), and (C9), there exists at most one invariant measure for (\ref{1}), and if  that invariant measure exists, then the support of that measure is $H$. If, moreover, (C7) holds, then there exists a unique invariant measure for (\ref{1}).
\end{thm}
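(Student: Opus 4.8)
The plan is to follow the strategy of Theorem \ref{thm3}, replacing the additive contraction estimate (\ref{5-2}) by its multiplicative counterpart. Concretely, I would first prove that $\{u^x\}_{x\in H}$ enjoys the $e$-property, then invoke the strong irreducibility established in Theorem \ref{thmmulti-Yang} (valid under Conditions (C1)--(C5)) together with \cite[Theorem 2]{Kapica} to conclude that there is at most one invariant measure. The support claim will follow directly from strong irreducibility, and the existence part will be supplied by Theorem \ref{thmmulti-02}.

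The heart of the argument is the contraction estimate $\EE\|u_t^x-u_t^y\|^2\le\|x-y\|^2$ for all $t\ge0$. To obtain it, fix $x,y\in H$, write $v_s=u_s^x-u_s^y$, and apply the It\^o formula to $\|v_t\|^2$. The Laplacian contributes $-2\alpha_1\int_0^t\|v_s\|_V^2\,ds$, which by the Poincar\'e inequality is bounded above by $-2\alpha_1\lambda_1\int_0^t\|v_s\|^2\,ds$; the nonlinearity contributes a nonpositive term, exactly as in the estimate (\ref{yang-3}) via Lemma \ref{lemma1} and Condition (C1); and the linear term gives $2\lambda\int_0^t\|v_s\|^2\,ds$. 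The new feature compared with the additive case is that the stochastic integrals no longer cancel: after compensating the $N$-integral over $Z_1$ and taking expectations (the genuine martingale parts vanishing after a standard localization), the jump terms produce
$$\EE\int_0^t\!\!\int_{Z_1^c}\!\|\sigma(u_s^x,z)-\sigma(u_s^y,z)\|^2\nu(dz)\,ds+\EE\int_0^t\!\!\int_{Z_1}\!\big[2{\rm{Re}}\langle v_s,\sigma(u_s^x,z)-\sigma(u_s^y,z)\rangle+\|\sigma(u_s^x,z)-\sigma(u_s^y,z)\|^2\big]\nu(dz)\,ds.$$
Invoking the Lipschitz bound of Condition (C8), $\|\sigma(u_s^x,z)-\sigma(u_s^y,z)\|\le L(z)\|v_s\|$, this entire jump contribution is dominated by $\big[\int_H L^2(z)\nu(dz)+2\int_{Z_1}L(z)\nu(dz)\big]\EE\int_0^t\|v_s\|^2\,ds$. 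Collecting all terms yields
$$\frac{d}{dt}\EE\|v_t\|^2\le\Big(2\lambda-2\alpha_1\lambda_1+\int_H L^2(z)\nu(dz)+2\int_{Z_1}L(z)\nu(dz)\Big)\EE\|v_t\|^2,$$
and the weak-dissipation Condition (C9) makes the coefficient on the right nonpositive. Gronwall's inequality then gives the claimed contraction, and hence, for every bounded Lipschitz $\phi$, $|\EE\phi(u_t^x)-\EE\phi(u_t^y)|\le\mathrm{Lip}(\phi)\,\|x-y\|$ uniformly in $t$, which is precisely the $e$-property.

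With the $e$-property in hand, uniqueness follows from \cite[Theorem 2]{Kapica} exactly as in Theorem \ref{thm3}, since strong irreducibility (Theorem \ref{thmmulti-Yang}) supplies the required irreducibility. For the support statement, note that if $\mu$ is an invariant measure then for any nonempty open $B\subset H$ and any fixed $t>0$ one has $\mu(B)=\int_H\PP(u_t^x\in B)\,\mu(dx)$; strong irreducibility forces the integrand to be strictly positive for every $x$, whence $\mu(B)>0$, so $S_\mu=H$. Finally, under the additional Condition (C7), Theorem \ref{thmmulti-02} (applicable because (C1)--(C5) contain (C1)--(C4)) guarantees existence of an invariant measure, and combined with ``at most one'' this yields a unique invariant measure.

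I expect the main obstacle to be the contraction estimate, specifically the bookkeeping of the jump terms: unlike the additive case where the noise difference is identically zero, here one must simultaneously control the compensator of $\|\sigma(u^x,\cdot)-\sigma(u^y,\cdot)\|^2$ over both $Z_1$ and $Z_1^c$ and the cross term $2{\rm{Re}}\langle v,\sigma(u^x,\cdot)-\sigma(u^y,\cdot)\rangle$ arising from the uncompensated large jumps, and then verify that the precise combination of integrals appearing is exactly the one that Condition (C9) is designed to absorb. Justifying that the martingale parts have zero expectation, via a localizing sequence of stopping times and the a priori bounds from the well-posedness Theorem \ref{thm1}, is routine but must be included.
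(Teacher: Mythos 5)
Your proposal is correct and follows essentially the same route as the paper: the It\^o formula for $\|u_t^x-u_t^y\|^2$, the nonpositivity of the nonlinear term via (\ref{5-3}), the Lipschitz bound (C8) on the jump contributions yielding exactly the coefficient $2\int_{Z_1}L\,d\nu+\int L^2\,d\nu+2\lambda-2\alpha_1\lambda_1$ absorbed by (C9), and then the $e$-property combined with strong irreducibility and \cite[Theorem 2]{Kapica}. The only (immaterial) difference is that the paper treats the strict-inequality case of (C9) separately via an exponential contraction $\EE\|u_t^x-u_t^y\|^2\le\|x-y\|^2e^{-ct}$, whereas you handle both cases uniformly through the $e$-property.
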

\begin{proof}
Applying the It\^{o} formula gives
\begin{eqnarray}\label{3-1}
&&\|u_t^{x}-u_t^{y}\|^2\nonumber\\
&=&\|x-y\|^2+2\lambda\int_{0}^{t}\|u_s^{x}-u_s^{y}\|^2ds-2\alpha_1\int_0^t\|u_s^{x}-u_s^{y}\|_V^2ds\nonumber\\
&&+2{\rm{Re}}\int_0^t\langle u_s^{x}-u_s^{y},(\alpha_2+i\beta_2)(|u_s^{x}|^{2\theta}u_s^{x}-|u_s^{y}|^{2\theta}u_s^{y})\rangle ds\nonumber\\
&&+\int_0^t\int_{Z_1}(\|u_{s-}^{x}-u_{s-}^{y}+\sigma(u_{s-}^{x},z)-\sigma(u_{s-}^{y},z)\|^2 -\|u_{s-}^{x}-u_{s-}^{y}\|^2) N(ds,dz)\nonumber\\
&&+\int_0^t\int_{Z_1^c}(\|u_{s-}^{x}-u_{s-}^{y}+\sigma(u_{s-}^{x},z)-\sigma(u_{s-}^{y},z)\|^2 -\|u_{s-}^{x}-u_{s-}^{y}\|^2) \tilde N(ds,dz)\nonumber\\
&&+\int_0^t\int_{Z_1^c}\|\sigma(u_s^{x},z)-\sigma(u_s^{y},z)\|^2\nu(dz)ds.
\end{eqnarray}
Using a standard stopping time argument, by (\ref{5-3}), Conditions (C8) and (C9), and the Poinc\'are inequality, we have
\begin{eqnarray}\label{3-0}
&&\mathbb{E}\|u_t^{x}-u_t^{y}\|^2\nonumber\\
&\leq&\|x-y\|^2+2\lambda\int_{0}^{t}\mathbb{E}\|u_s^{x}-u_s^{y}\|^2ds-2\alpha_1\int_0^t\mathbb{E}\|u_s^{x}-u_s^{y}\|_V^2ds\\
&&+\big(2\int_{Z_1}L(z)\nu(dz)+\int_{H}L^2(z)\nu(dz)\big)\mathbb{E}\int_0^t\|u_s^{x}-u_s^{y}\|^2ds\nonumber\\
&\leq&\|x-y\|^2+\big(2\int_{Z_1}L(z)\nu(dz)+\int_{H}L^2(z)\nu(dz)+2\lambda-2\alpha_1\lambda_1\big)\int_0^t\mathbb{E}\|u_s^{x}-u_s^{y}\|^2ds.\nonumber
\end{eqnarray}
If $2\int_{Z_1}L(z)\nu(dz)+\int_{H}L^2(z)\nu(dz)+2\lambda<2\alpha_1\lambda_1$, then by (\ref{3-0}) there exists a constant $c>0$ such that
\begin{equation*}
\mathbb{E}\|u_t^{x}-u_t^{y}\|^2\leq\|x-y\|^2e^{-ct},
\end{equation*}
which implies the uniqueness of the invariant measure. Theorem \ref{thmmulti-Yang} implies that if invariant measures exist, then the support of any invariant measure is $H$. If $2\int_{Z_1}L(z)\nu(dz)+\int_{H}L^2(z)\nu(dz)+2\lambda=2\alpha_1\lambda_1$, then (\ref{3-0}) implies that  $\{u^x;x\in H\}$  satisfies the $e$-property. Combining  Theorem \ref{thmmulti-Yang} and \cite[Theorem 2]{Kapica}, there exists at most one invariant measure for (\ref{1}), and if invariant measures exist, then the support of any invariant measure is $H$.
We remark that the novelty of our result in this subsection
concerns the case $2\int_{Z_1}L(z)\nu(dz)+\int_{H}L^2(z)\nu(dz)+2\lambda=2\alpha_1\lambda_1$.

If, moreover, (C7) holds, then Theorem \ref{thmmulti-02} implies the existence of invariant measure for (\ref{1}).

The proof of Theorem \ref{thm4} is complete.
\end{proof}
\vskip 0.4cm
\noindent{\bf Acknowledgement}. This work is partially  supported by NSFC (No. 12131019, 11971456, 11721101).

\end{document}